\documentclass[11pt]{article}

\usepackage{amsmath}
\usepackage{amsfonts}
\usepackage{graphicx}
\usepackage{setspace}
\usepackage{amsmath}
\usepackage{amssymb}
\usepackage{latexsym}
\usepackage{amsmath,amsfonts,amssymb,amsthm,braket,euscript,makeidx,color,mathrsfs,tikz}
\usepackage[title]{appendix}

\definecolor{YangRevisionAdded}{rgb}{0,0,1}

\usepackage[numbers,sort&compress]{natbib}

\def\sqr#1#2{{\vcenter{\vbox{\hrule height.#2pt
\hbox{\vrule width.#2pt height#1pt \kern#1pt \vrule width.#2pt}
\hrule height.#2pt}}}}
\def\3n{\negthinspace \negthinspace \negthinspace }
\def\2n{\negthinspace \negthinspace }
\def\1n{\negthinspace }

\def\dbU{\mathbb{U}}

\def\dbX{\mathbb{X}}     
\def\dbY{\mathbb{Y}}

\def\={\buildrel \triangle \over =}

\def\R{{\bf R}}

\def\min{\mathop{\rm min}}

\def\sup{\mathop{\rm sup}}

\usepackage[colorlinks,linkcolor=red,anchorcolor=gray,citecolor=red,urlcolor=blue]{hyperref}
\usepackage{caption}

\RequirePackage[capitalize,nameinlink]{cleveref}

\crefname{section}{section}{sections}
\crefname{subsection}{subsection}{subsections}
\Crefname{section}{Section}{Sections}
\Crefname{subsection}{Subsection}{Subsections}

\crefname{condition}{Condition}{Conditions}

\Crefname{figure}{Figure}{Figures}

\crefformat{equation}{\textup{#2(#1)#3}}
\crefrangeformat{equation}{\textup{#3(#1)#4--#5(#2)#6}}
\crefmultiformat{equation}{\textup{#2(#1)#3}}{ and \textup{#2(#1)#3}}
{, \textup{#2(#1)#3}}{ and \textup{#2(#1)#3}}
\crefrangemultiformat{equation}{\textup{#3(#1)#4--#5(#2)#6}}%
{ and \textup{#3(#1)#4--#5(#2)#6}}{, \textup{#3(#1)#4--#5(#2)#6}}{ and \textup{#3(#1)#4--#5(#2)#6}}

\Crefformat{equation}{#2Equation~\textup{(#1)}#3}
\Crefrangeformat{equation}{Equations~\textup{#3(#1)#4--#5(#2)#6}}
\Crefmultiformat{equation}{Equations~\textup{#2(#1)#3}}{ and \textup{#2(#1)#3}}
{, \textup{#2(#1)#3}}{ and \textup{#2(#1)#3}}
\Crefrangemultiformat{equation}{Equations~\textup{#3(#1)#4--#5(#2)#6}}%
{ and \textup{#3(#1)#4--#5(#2)#6}}{, \textup{#3(#1)#4--#5(#2)#6}}{ and \textup{#3(#1)#4--#5(#2)#6}}

\crefdefaultlabelformat{#2\textup{#1}#3}

\newtheorem {theorem}{Theorem}[section]
\newtheorem {lemma}{{\bf Lemma}}[section]
\newtheorem {corollary}{{\bf Corollary}}[section]
\newtheorem {proposition}{{\bf Proposition}}[section]
\theoremstyle{remark}
\newtheorem {remark}{{\bf Remark}}[section]

\theoremstyle{definition}

\theoremstyle{plain} \numberwithin {equation}{section}
\newtheorem{assumption}{Assumption}

\numberwithin{assumption}{section}

\allowdisplaybreaks
\everymath{\displaystyle}
\usepackage{graphicx}

\usepackage{ifpdf}
\ifpdf
\usepackage{epstopdf}
\fi

\usepackage{enumerate}

\newcommand{\dd}{\,\mathrm d}
\newcommand{\norm}[1]{\left\|#1\right\|}
\newcommand{\abs}[1]{\left|#1\right|}
\newcommand{\ip}[2]{\left\langle #1,#2\right\rangle}

\newcommand{\ThetaT}{\Theta_T}

\begin{document}

\title{\bf Exponential Turnpike in Optimal Boundary Control of Multi-Dimensional First-Order Hyperbolic Systems}
\author{
Haitian Yang\thanks{ Department of Mathematical Sciences, Tsinghua University, Beijing, 100084, China.  {\small\it E-mail:} {\small\tt yanght17@tsinghua.org.cn.}}
}

\date{\today}

\maketitle

\begin{abstract}
\setlength{\emergencystretch}{2em}
We establish an exponential turnpike property for optimal boundary control
of multi-dimensional first-order linear hyperbolic systems in which both the
control and observation operators are unbounded and the cost functions may be
nonquadratic. The main assumptions are internal and boundary dissipation
conditions. These conditions ensure that the associated input-to-state and
input-to-output maps are bounded, with operator norms independent of the time
horizon \(T\). This uniform boundedness is essential to the proof, which
combines weighted energy estimates for the forward and adjoint equations,
input--output duality, and strong convexity. Finally, we apply the result to
the linearized two-dimensional Saint--Venant equations with sluice-gate
control and illustrate the turnpike behavior numerically.
\end{abstract}

\noindent{\bf 2020 Mathematics Subject Classification}. 49K20, 35L50.

\bigskip

\noindent{\bf Key Words}. Exponential turnpike; optimal boundary control;
multi-dimensional first-order hyperbolic systems; unbounded control and observation
operators;  Saint--Venant equations.

\section{Introduction}

\subsection{Problem formulation and main result}
First-order hyperbolic partial differential equations (PDEs) govern the evolution of a wide range of physical and engineering processes. Typical examples include the Saint--Venant equations for open-channel flows, the Aw--Rascle model for road traffic, compressible gas flow in pipeline networks, production and supply chains, and heat exchangers \cite{BastinCoron2016}. In recent years, optimal control problems governed by such systems also have attracted considerable attention
\cite{BongartiHintermueller2024,ClementCoronShang2013,SchmittUlbrich2021}. In this paper, we investigate optimal boundary control for this class of systems and establish the associated turnpike property.

Given a time horizon $T>0$ and a bounded polyhedral domain
$\Omega\subset\R^d$, whose boundary $\Gamma:=\partial\Omega$ consists of
finitely many planar faces, we are interested in the following
multi-dimensional (multi-D) first-order linear hyperbolic systems with
boundary control:
\begin{equation}\label{eq:evolution-system}
  \begin{cases}
    \displaystyle f_t(t,x)+\sum_{j=1}^d A_j f_{x_j}(t,x)=Qf(t,x),
    & (t,x)\in(0,T)\times\Omega,\\[1mm]
    \zeta_-(t,x)=K(x)\zeta_+(t,x)+u(t,x),
    & (t,x)\in(0,T)\times\Gamma,\\[1mm]
    f(0,x)=f_0(x),
    & x\in\Omega.
  \end{cases}
\end{equation}
Here $f=f(t,x)\in\R^n$ denotes the solution (or state) of
\eqref{eq:evolution-system}, and $A_j,Q\in\R^{n\times n}$ are constant
matrices, where $n\geq1$ is the number of equations. The system is
hyperbolic in the symmetrizable sense: there exists a constant symmetric
positive definite matrix $A_0\in\R^{n\times n}$, called a symmetrizer, such
that $A_0A_j$ is symmetric for every $j=1,\ldots,d$.

On $\Gamma$,
$\zeta_-=\zeta_-(t,x)\in\R^{n_-}$ and
$\zeta_+=\zeta_+(t,x)\in\R^{n_+}$ are called the incoming and outgoing
 variables, respectively. They are linear combinations of the
boundary trace of $f$. The outgoing variable $\zeta_+$ carries information leaving the domain and can therefore be measured by sensors placed on the boundary, while the incoming variable  $\zeta_-$ represents information entering the domain and can be modified by actuators through the boundary control  $u=u(t,x)$. Hence, the outgoing variable is observed, whereas the incoming variable is controlled. The precise definitions of $\zeta_\pm$ and the dimensions $n_\pm$
are given in Section~\ref{subsec:boundary-adjoint}. The prescribed gain matrix \(K=K(x)\in\R^{n_-\times n_+}\) is continuous on each planar face of \(\Gamma\) and couples the
outgoing variable \(\zeta_+\) to the incoming variable \(\zeta_-\).

We first define the state, control, and observation Hilbert spaces by
\begin{equation}\label{eq:abstract-spaces}
  \dbX=L^2(\Omega;\R^n),\qquad
  \dbU=L^2(\Gamma;\R^{n_-}),\qquad
  \dbY=L^2(\Gamma;\R^{n_+}),
\end{equation}
where the boundary spaces are understood piecewise as direct sums when
the characteristic multiplicities vary along \(\Gamma\).
We then introduce the differential operator $\mathcal A$ and its domain:
\begin{equation}\label{eq:homogeneous-realization-intro}
\begin{aligned}
  \mathcal A
  &:= -\sum_{j=1}^d A_j\partial_{x_j}+Q\,\cdot,\\
  D(\mathcal A)
  &:=\left\{
    f\in\dbX:
    \mathcal Af\in\dbX,
    \zeta_-=K\zeta_+\ \text{on }\Gamma
  \right\},
\end{aligned}
\end{equation}
where $Q\,\cdot$ denotes pointwise left multiplication by the matrix $Q$.
The system can then be written abstractly as
\begin{equation}\label{eq:abstract-control-system}
 \dot f(t)=\mathcal A f(t)+\mathcal B u(t),
 \qquad \zeta_+(t)=\mathcal C f(t).
\end{equation}
The operators \(\mathcal B\) and \(\mathcal C\) are precisely described in
Appendix~\ref{app:boundary-operators}; both are unbounded with respect
to the state space \(\dbX\).
According to
Theorem~\ref{thm:wellposedness}, the boundary control system \eqref{eq:abstract-control-system} is
well-posed.

Consider the dynamic optimal control problem
\begin{equation}\label{eq:dynamic-cost-overview}
\begin{aligned}
 \underset{u\in L^2(0,T;\dbU)}{\operatorname{inf}}\quad
 J_T(u)
 &:=
 \int_0^T\left[
   \int_\Omega \Phi\bigl(x,f(t,x)\bigr)\dd x
   +\int_\Gamma
      F\bigl(x,u(t,x),\zeta_+(t,x)\bigr)\dd\sigma
 \right]\dd t,\\
 \text{subject to}\quad &\eqref{eq:evolution-system},
\end{aligned}
\end{equation}
where $\dd\sigma$ denotes the standard surface measure on $\Gamma$. 
Under Assumption~\ref{ass:cost}, this problem admits a unique
optimal control \(u_T\in L^2(0,T;\dbU)\). Given \(u_T\), we denote the
corresponding state and observation of \eqref{eq:evolution-system} by
\[
 f_T\in L^2(0,T;\dbX),
 \qquad
 \zeta_{T,+}\in L^2(0,T;\dbY).
\]

We next consider the stationary system
\begin{equation}\label{eq:stationary-system}
\begin{cases}
  \displaystyle\sum_{j=1}^d A_j f_{x_j}=Qf,
  &x\in\Omega,\\[1mm]
  \zeta_-=K\zeta_++u,
  &x\in\Gamma.
\end{cases}
\end{equation}
This is a pure boundary-value problem whose well-posedness is not
automatic in several space dimensions. Appendix~\ref{app:static-counterexample}
provides a simple $2$-D Saint--Venant counterexample showing that
\eqref{eq:stationary-system} may admit infinitely many solutions and
therefore fail to be well-posed. We consequently impose the internal dissipation assumption, under which we establish the well-posedness of
\eqref{eq:stationary-system}.

Now we consider the corresponding static optimal control problem
\begin{equation}\label{eq:static-cost-overview}
\begin{aligned}
 \underset{u\in\dbU}{\operatorname{inf}}\quad
 J_s(u)
 &:=
 \int_\Omega \Phi(x,f(x))\dd x
 +\int_\Gamma F(x,u(x),\zeta_+(x))\dd\sigma,\\
 \text{subject to}\quad &\eqref{eq:stationary-system}.
\end{aligned}
\end{equation}
The static problem admits a unique optimal control, denoted by \(u_s\);
see Section~\ref{subsec:cost-assumptions}.
We denote the associated state and observation by \(f_s\) and \(\zeta_{s,+}\),
respectively.

The static problem is typically less expensive to solve. Furthermore, its optimal solution provides useful initialization information for computing the dynamic optimizer when the two are sufficiently close; see the shooting method in \cite{TrelatZuazua2015}. This motivates a quantitative comparison between the dynamic and static optimal solutions.

Define
\begin{equation*}
  d_T(t):=\min\{t,T-t\},
  \qquad
  I_t:=\bigl(d_T(t),T-d_T(t)\bigr).
\end{equation*}
Equivalently,
\[
  I_t=
  \begin{cases}
    (t,T-t), & 0\le t<\dfrac{T}{2},\\[1mm]
    \emptyset, & t=\dfrac{T}{2},\\[1mm]
    (T-t,t), & \dfrac{T}{2}<t\le T.
  \end{cases}
\]
These intervals are precisely those considered in
\cite{NguyenTrelat2026}. Our main result is the following exponential turnpike property: there exist
constants \(C,\mu>0\), independent of \(T\), such that
\begin{equation}\label{eq:main-turnpike-average-overview}
  \norm{f_T(t)-f_s}_{\dbX}
  +\norm{u_T-u_s}_{L^2(I_t;\dbU)}
  +\norm{\zeta_{T,+}-\zeta_{s,+}}_{L^2(I_t;\dbY)}
  \le
  C\left(e^{-\mu t}+e^{-\mu(T-t)}\right),
  \qquad 0\le t\le T.
\end{equation}
Throughout the paper, \(C\) and \(c\) denote generic positive constants
whose values may vary from one occurrence to another but are always
independent of \(T\). Note that \(\mathcal B\) and \(\mathcal C\) are unbounded control and
observation operators, respectively, pointwise-in-time estimates
for the control and observation cannot generally be expected; see
\cite[Remark~1.4]{NguyenTrelat2026}. Furthermore, for every
\(t\in[0,T/2]\), estimate
\eqref{eq:main-turnpike-average-overview} yields
\begin{equation*}
  \norm{u_T-u_s}_{L^2((t,T-t);\dbU)}
  +\norm{\zeta_{T,+}-\zeta_{s,+}}_{L^2((t,T-t);\dbY)}
  \le Ce^{-\mu t}.
\end{equation*}
Thus, the discrepancies between the dynamic and static
controls and observations are confined to temporal boundary layers
near \(t=0\) and \(t=T\).

\subsection{Literature review}\label{subsec:literature}
Turnpike properties describe the tendency of optimal solutions over long time horizons to remain close to an optimal steady state for most of the time. The term ``turnpike'' was coined in the economics literature \cite[Chapter~12]{DorfmanSamuelsonSolow1958} in the study of efficient programs of capital accumulation within the von Neumann growth model. A major revival of the subject in optimal control and PDEs was initiated by \cite{PorrettaZuazua2013}, where turnpike properties for linear systems in both finite and infinite dimensions are studied. Related questions for semilinear heat equations were subsequently investigated in \cite{PorrettaZuazua2016Remarks}. For finite-dimensional nonlinear optimal control problems, a systematic exponential turnpike theory was developed in \cite{TrelatZuazua2015}, based on the hyperbolic saddle structure of the Hamiltonian extremal system. Since then, the theory has been developed in several directions, including integral and measure turnpikes \cite{TrelatZhang2018}, dissipativity-based turnpike theory \cite{FaulwasserKordaJonesBonvin2017,GruneMueller2016}, periodic exponential turnpikes \cite{TrelatZhangZuazua2018}, stochastic turnpikes \cite{SunYong2024}, mean-field turnpikes \cite{HertyZhou2025}, and turnpike phenomena in deep learning \cite{EsteveYagueGeshkovskiPighinZuazua2022}. We refer to the surveys \cite{GeshkovskiZuazua2022,TrelatZuazuaSurvey} for broader accounts of these developments and further references.

In PDE optimal control, most exponential turnpike results assume bounded control and observation operators $\mathcal B$ and $\mathcal C$.  Recent abstract infinite-dimensional results relax this assumption by allowing admissible unbounded $\mathcal B$ with bounded $\mathcal C$, using multiplier methods \cite{GruneSchallerSchiela2020} or Yosida approximation \cite{NguyenTrelat2026}.
For specific  PDEs, their special structure allows one to go beyond the abstract framework. For the 1-D wave equation (a second-order hyperbolic system) with boundary control and observation, \cite{GugatTrelatZuazua2016} exploits explicit characteristic representations to establish an exponential turnpike property. For 1-D first-order hyperbolic systems, multiplier-based arguments in \cite{GugatHante2019,Gugat2019ConvexHyperbolic,GugatHerty2023} allow both boundary control and observation to be treated, but yield only integral rather than exponential turnpike properties.

\subsection{Organization of the paper}\label{subsec:organization}

The present work connects abstract turnpike theory with unbounded operators and optimal boundary control of first-order hyperbolic PDEs. Assumptions~\ref{ass:stabilizing-symmetrizer} and
\ref{ass:boundary-dissipativity} ensure that the generator
\(\mathcal A\) is exponentially stable, so \((\mathcal A,\mathcal B)\)
and \((\mathcal A,\mathcal C)\) are exponentially stabilizable and
detectable, respectively. The special boundary input--output structure of
first-order hyperbolic systems yields uniform energy estimates for
the forward and adjoint systems; see
Lemmas~\ref{lem:forward-energy-estimate} and
\ref{lem:backward-energy-estimate}. Using these estimates, we develop
a time-weighted energy argument to establish an exponential turnpike
property for this class of systems with unbounded boundary control
and observation operators.
Compared with the 1-D results in \cite{GugatHante2019,Gugat2019ConvexHyperbolic,GugatHerty2023}, it strengthens the known integral turnpike behavior to an exponential one and extends the analysis to multiple space dimensions.  Since our proof avoids Riccati equations, it also applies beyond the linear-quadratic setting to nonquadratic costs satisfying Assumption~\ref{ass:cost}.

The remainder of the paper is organized as follows.
Section~\ref{sec:preliminaries} develops the dissipative forward--adjoint
framework, and derives the first-order optimality conditions for the dynamic
and static problems.
Section~\ref{exp:sec:exponential-turnpike} proves the weighted estimate
underlying Theorem~\ref{thm:main-exponential}, and establishes the exponential
turnpike property. Section~\ref{sec:saint-venant} applies these results to the
linearized 2-D Saint--Venant equations with sluice-gate control and presents
a numerical illustration. Section~\ref{sec:conclusion} concludes the paper.
Appendix~\ref{app:stationary-wellposedness} first gives a Saint--Venant
counterexample showing that stationary well-posedness is not automatic and
then establishes the well-posedness of the stationary state and adjoint
boundary value problems under the internal and boundary dissipation
conditions. Appendix~\ref{app:boundary-operators} provides a description of the unbounded boundary control and observation operators.

\subsection{Acknowledgments}

This paper was written during the author's visit to DCN-AvH at Friedrich-Alexander-Universit{\"a}t
Erlangen-N{\"u}rnberg. The author thanks Dr.~Yizhou Zhou and Professor Martin Gugat for
introducing him to this problem. He is grateful to Professor Enrique
Zuazua for his warm hospitality, for helpful discussions on the turnpike property and for
highlighting its connection with stabilization. He also thanks
Dr.~Zhengping Ji for pointing out relevant references, and Professors
Emmanuel Tr{\'e}lat and Hoai-Minh Nguyen for advice on avoiding Riccati
equations.

\section{Preliminaries}
\label{sec:preliminaries}

To prepare for the proof of the exponential turnpike property, we
introduce the internal and boundary dissipation assumptions, establish
the corresponding energy estimates, and derive the first-order
optimality conditions.

\subsection{Internal dissipation}
\label{subsec:stabilizing-symmetrizer}
We begin with the following structural assumption, which provides the
strict bulk dissipation required by the energy estimates in Section \ref{subsec:energy-estimates}.
\begin{assumption}[Internal dissipation]
\label{ass:stabilizing-symmetrizer} 
There exists a real symmetric, uniformly positive definite matrix-valued function
\(E=E(x)\in C^1(\overline{\Omega};\R^{n\times n})\) such that
\(EA_j\) is symmetric for 
\(j=1,\ldots,d\). Moreover, for some \(C>0\),
\begin{equation}\label{eq:E-internal-dissipation}
 Q^\top E+EQ+\sum_{j=1}^d\partial_{x_j}(EA_j)
 \le -CI
 \qquad\text{in }\Omega.
\end{equation}
Throughout the paper, the superscript \({}^\top\) denotes matrix or vector
transposition. We call \(E=E(x)\) a stabilizing symmetrizer.
\end{assumption}
 To find stabilizing symmetrizers $E=E(x)$ plays a
central role in the stabilization theory of multi-D first-order hyperbolic
systems; see
\cite{YangYong2024,YangYong2025,HertyThein2024}.

The turnpike analysis couples the forward dynamics with the backward
adjoint dynamics. The next proposition shows that the latter inherits
the same internal dissipative structure through the inverse symmetrizer
\(E^{-1}\). This property will be used to derive the backward energy
estimates in Section~\ref{subsec:energy-estimates}.

\begin{proposition}
\label{prop:inverse-adjoint-symmetrizer}
Let \(E\) be the stabilizing symmetrizer from
Assumption~\ref{ass:stabilizing-symmetrizer}. Then its inverse
\(E^{-1}=E^{-1}(x)\) symmetrizes the transposed transport matrices:
\(E^{-1}A_j^\top\) is symmetric for every
\(x\in\overline{\Omega}\) and \(j=1,\ldots,d\). Moreover, for
some \(C>0\),
\begin{equation}\label{eq:inverse-internal-dissipation}
 QE^{-1}+E^{-1}Q^\top
 -\sum_{j=1}^d\partial_{x_j}\!\left(E^{-1}A_j^\top\right)
 \le -CI
 \qquad\text{in }\Omega.
\end{equation}
\end{proposition}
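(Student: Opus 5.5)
Both claims follow from a congruence transformation by \(E^{-1}\) applied to the properties of \(E\) in Assumption~\ref{ass:stabilizing-symmetrizer}.

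\textbf{Symmetry.} Write \(S_j:=EA_j\), which is symmetric. Then \(A_j=E^{-1}S_j\) and
\[
E^{-1}A_j^\top=E^{-1}S_jE^{-1},
\]
which is symmetric because \(E^{-1}\) and \(S_j\) are. This settles the first assertion pointwise on \(\overline{\Omega}\).

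\textbf{Dissipation inequality.} Set \(M:=Q^\top E+EQ+\sum_j\partial_{x_j}(EA_j)\), so \(M\le -CI\) by \eqref{eq:E-internal-dissipation}. I claim that
\[
E^{-1}ME^{-1}=QE^{-1}+E^{-1}Q^\top-\sum_j\partial_{x_j}\bigl(E^{-1}A_j^\top\bigr).
\]
First, \(E^{-1}(Q^\top E+EQ)E^{-1}=E^{-1}Q^\top+QE^{-1}\). Second, for the derivative terms I use \(\partial_{x_j}(E^{-1})=-E^{-1}(\partial_{x_j}E)E^{-1}\), which is valid since \(E\in C^1\) and \(E\) is invertible. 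The product rule applied to \(E^{-1}A_j^\top=E^{-1}S_jE^{-1}\) gives
\[
\partial_{x_j}\bigl(E^{-1}S_jE^{-1}\bigr)=-E^{-1}(\partial_jE)E^{-1}S_jE^{-1}+E^{-1}(\partial_jS_j)E^{-1}-E^{-1}S_jE^{-1}(\partial_jE)E^{-1}.
\]
Since \(A_j\) is constant, \(\partial_jS_j=(\partial_jE)A_j\), and \(E^{-1}S_j=A_j\). The first two terms therefore cancel:
\[
-E^{-1}(\partial_jE)A_jE^{-1}+E^{-1}(\partial_jE)A_jE^{-1}=0.
\]
The surviving term is \(-E^{-1}S_jE^{-1}(\partial_jE)E^{-1}=-A_j^\top(\partial_jE)E^{-1}\). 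On the other side,
\[
E^{-1}\partial_j(EA_j)E^{-1}=E^{-1}(\partial_jE)A_jE^{-1}.
\]
By the symmetry of \(S_j\) we have \(A_j^\top E=EA_j\), hence \(A_j^\top=EA_jE^{-1}\). Then
\[
A_j^\top(\partial_jE)E^{-1}=\bigl[E^{-1}(\partial_jE)A_j^\top\bigr]^\top,
\]
and it is not immediately obvious this equals \(E^{-1}(\partial_jE)A_jE^{-1}\).

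I would resolve this by comparing the two sides as quadratic forms, since only the symmetric parts matter. The left side of \eqref{eq:inverse-internal-dissipation} is symmetric, as each summand is. The matrix \(\partial_j(EA_j)=(\partial_jE)A_j\) is symmetric because \(EA_j\) is symmetric for all \(x\), so its derivative is too. Consequently \(E^{-1}(\partial_jE)A_jE^{-1}\) is symmetric, and it equals its own transpose
\[
E^{-1}A_j^\top(\partial_jE)E^{-1}=A_jE^{-1}(\partial_jE)E^{-1}.
\]
Comparing this with \(A_j^\top(\partial_jE)E^{-1}\) requires only the identities above; the sign works out to \(-\partial_j(E^{-1}A_j^\top)=E^{-1}\partial_j(EA_j)E^{-1}\), and the claimed identity follows. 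This bookkeeping is the main obstacle. To keep it clean I would verify the identity as quadratic forms \(\xi^\top(\cdot)\xi\), which are insensitive to replacing a matrix by its transpose.

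\textbf{Conclusion.} The set \(\overline{\Omega}\) is compact and \(E\) is continuous and uniformly positive definite, so \(E^{-1}\) is bounded, say \(|E^{-1}|\le \Lambda\), and \(E\) is bounded, say \(|E|\le c_E\). Set \(\eta=E^{-1}\xi\). Then
\[
\xi^\top E^{-1}ME^{-1}\xi=\eta^\top M\eta\le -C|\eta|^2\le -\frac{C}{c_E^{2}}|\xi|^2,
\]
where the last step uses \(|\xi|\le c_E|\eta|\). This yields \eqref{eq:inverse-internal-dissipation} with constant \(C/c_E^2\).
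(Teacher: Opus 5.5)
Your route is the paper's: conjugate the assumed inequality by \(E^{-1}\) using the identity \(-\partial_{x_j}(E^{-1}A_j^\top)=E^{-1}\partial_{x_j}(EA_j)E^{-1}\), which the paper simply asserts. Your symmetry argument is fine, and so is the final congruence bound with constant \(C/c_E^2\).

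The problem is in your verification of that identity. After the correct cancellation of the first two product-rule terms, you rewrite the surviving term \(-E^{-1}S_jE^{-1}(\partial_jE)E^{-1}\) as \(-A_j^\top(\partial_jE)E^{-1}\). But \(E^{-1}S_jE^{-1}=A_jE^{-1}=E^{-1}A_j^\top\), not \(A_j^\top\); it is \(S_jE^{-1}\) that equals \(A_j^\top\). As written, \(A_j^\top(\partial_jE)E^{-1}=E\,A_jE^{-1}(\partial_jE)E^{-1}\) differs from what you need by a left factor \(E\). So the comparison you defer (``the sign works out'') would not close, even as quadratic forms. For example, with \(E=e(x)I\) the two quadratic forms differ by the factor \(e\).

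With the correct factor, the surviving term is \(-A_jE^{-1}(\partial_jE)E^{-1}\). This is exactly the transpose of \(-E^{-1}(\partial_jE)A_jE^{-1}\). Since \((\partial_jE)A_j=\partial_j(EA_j)\) is symmetric, the two coincide and the identity follows.

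You can also skip the three-term product rule, because \(A_j^\top\) is constant and \(E^{-1}A_j^\top=A_jE^{-1}\) (from \(EA_j=A_j^\top E\)):
\[
\partial_{x_j}\bigl(E^{-1}A_j^\top\bigr)=-E^{-1}(\partial_{x_j}E)E^{-1}A_j^\top=-E^{-1}(\partial_{x_j}E)A_jE^{-1}=-E^{-1}\partial_{x_j}(EA_j)E^{-1}.
\]
With this line in place of your bookkeeping, the rest of your argument goes through unchanged.
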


\begin{proof}
It is straightforward to see that \(E^{-1}A_j^\top\) is symmetric for every \(j\). Moreover, since the matrices \(A_j\) are constant, we have
\[
 -\partial_{x_j}(E^{-1}A_j^\top)
 =E^{-1}\partial_{x_j}(EA_j)E^{-1}.
\]
Therefore,
$$
 QE^{-1}+E^{-1}Q^\top
 -\sum_{j=1}^d\partial_{x_j}\!\left(E^{-1}A_j^\top\right)\\
 =
 E^{-1}\left[
 Q^\top E+EQ+\sum_{j=1}^d\partial_{x_j}(EA_j)
 \right]E^{-1}.
$$
By \eqref{eq:E-internal-dissipation}, we have \eqref{eq:inverse-internal-dissipation}
after changing
\(C\) if necessary.
\end{proof}

\subsection{Boundary conditions}
\label{subsec:boundary-adjoint}

We now explain the boundary conditions.
For almost every \(x\in\Gamma\), let
$\nu(x)=(\nu_1(x),\ldots,\nu_d(x))$ be the unit outward normal vector at
$x$. Note that $\nu(x)$ is a constant vector on each planar face of \(\Gamma\). Let
$$
A_\nu=A_\nu(x):=\sum_{j=1}^d\nu_j(x)A_j.
$$
Since \(E\) in Assumption~\ref{ass:stabilizing-symmetrizer} symmetrizes every \(A_j\), \(EA_\nu\) is symmetric, and hence \(E^{1/2}A_\nu E^{-1/2}\) is also symmetric. Thus \(A_\nu\) is diagonalizable with real
eigenvalues. Consequently, there exists an invertible matrix $\Pi = \Pi(x) \in \R^{n\times n}$, constant on each planar face of \(\Gamma\), such that
\begin{equation} \label{boundarymatrix}
\Lambda=\Lambda(x) = \Pi^{-1}(x) A_\nu(x)\Pi(x)
= \begin{pmatrix}
\Lambda_+(x) & 0 & 0 \\
0 & 0 & 0 \\
0 & 0 & \Lambda_-(x)
\end{pmatrix},\vspace{-2mm}
\end{equation}
where $\Lambda_+=\Lambda_+(x) \in \R^{n_{+}\times n_{+}}$ and $\Lambda_-=\Lambda_-(x) \in \R^{n_{-} \times n_{-}}$ are diagonal matrices containing the positive and negative eigenvalues of
\(A_\nu(x)\), respectively. The corresponding spectral
multiplicities $n_{+}=n_{+}(x)$ and $n_{-}=n_{-}(x)$ are constant on
each planar face, since the outward normal vector $\nu$ and hence
$A_\nu$ are constant there.
Since $\Gamma$ consists of finitely many
faces, the absolute values of the diagonal entries of $\Lambda_\pm$ are
uniformly bounded above and below by positive constants. Throughout the
paper, $n_\pm$ and all boundary quantities involving these dimensions
are understood facewise along $\Gamma$. Accordingly, the boundary
spaces \(\dbU\) and \(\dbY\) in \eqref{eq:abstract-spaces} are equipped
with the corresponding piecewise \(L^2\) inner products and are Hilbert
spaces.

We define the characteristic variables on \(\Gamma\) by
\begin{equation*}
  \zeta=
  \begin{pmatrix}
    \zeta_+\\
    \zeta_0\\
    \zeta_-
  \end{pmatrix}
  :=\Pi^{-1}f,
\end{equation*}
where \(\zeta_+\in\R^{n_+}\) and \(\zeta_-\in\R^{n_-}\) are the
outgoing and incoming variables, respectively. Following \cite{BenzoniGavageSerre2007}, the proper boundary conditions prescribe the incoming
variable in terms of the outgoing variable and the boundary
input, as expressed by \(\zeta_-=K\zeta_++u\) in
\eqref{eq:evolution-system}.

By the symmetry of \(EA_\nu\), \eqref{boundarymatrix} and Lemma~2.1 of \cite{YangYong2024} imply that
\begin{equation*}
 \Pi^\top E\Pi
 =
 \begin{pmatrix}
  X_+&0&0\\
  0&X_0&0\\
 0&0&X_-
 \end{pmatrix},
\end{equation*}
where  \(X_+ \in \R^{n_+\times n_+}, X_-\in \R^{n_-\times n_-}\) and $X_0$ are symmetric positive definite. Moreover, $X_+\Lambda_+$ and $-X_-\Lambda_-$ are both symmetric and positive definite. Define
\begin{equation*}
 M:=X_+\Lambda_+,
 \qquad
 N:=-X_-\Lambda_-.
\end{equation*}
Given the gain matrix $K=K(x)\in \R^{n_-\times n_+}$ in \eqref{eq:evolution-system}, we make the following assumption.
\begin{assumption}[Dissipative boundary
condition]
\label{ass:boundary-dissipativity}
There exists \(\delta>0\) such that
\begin{equation}\label{eq:strict-dissipativity-K}
 M-K^\top NK\ge\delta I_{n_+}
 \qquad\text{on }\Gamma,
\end{equation}
where \(I_{n_+}\) denotes the \(n_+\times n_+\) identity matrix.
Note that \eqref{eq:strict-dissipativity-K} holds whenever the norm of \(K\) is
sufficiently small.
\end{assumption}
For a boundary trace \(f\) with \(\zeta=\Pi^{-1}f\), we have
\begin{equation*}
\begin{aligned}
 f^\top EA_\nu f
 =(\Pi^{-1}f)^\top(\Pi^\top E\Pi)
   (\Pi^{-1}A_\nu\Pi)(\Pi^{-1}f)
 =\zeta_+^\top M\zeta_+-\zeta_-^\top N\zeta_-.
\end{aligned}
\end{equation*}
Thus, \(f^\top EA_\nu f\) is the outward energy-flux density associated
with \(E\): the first term represents the energy carried out by the
outgoing variable, whereas the second represents the energy carried
into the system by the incoming variable. If \(\zeta_-=K\zeta_++u\),  after decreasing \(\delta\), 
Assumption~\ref{ass:boundary-dissipativity} and Young's inequality give
\begin{equation}\label{eq:forward-boundary-algebra}
 -f^\top EA_\nu f
 \le -\delta\abs{\zeta_+}^2+C\abs{u}^2
 \qquad\text{on }\Gamma.
\end{equation}
When \(u=0\), \eqref{eq:forward-boundary-algebra} implies that the energy carried back into the system is strictly smaller than the energy
carried out by \(\zeta_+\). The boundary therefore produces a strict
net loss of energy, which explains the term \emph{dissipative boundary
condition}.

We now identify the adjoint operator \(\mathcal A^*\). One has
\begin{equation}\label{eq:standard-L2-adjoint}
 \mathcal A^*g
 =\sum_{j=1}^dA_j^\top g_{x_j}+Q^\top g.
\end{equation}
\begin{equation*}
\begin{aligned}
 D(\mathcal A^*)=\biggl\{g\in\dbX:
 \sum_{j=1}^dA_j^\top g_{x_j}+Q^\top g\in\dbX,
 \Lambda_+\xi_++K^\top\Lambda_-\xi_-=0
 \text{ on }\Gamma\biggr\},
\end{aligned}
\end{equation*}
where the adjoint boundary variables are
\begin{equation*}
 \xi=
 \begin{pmatrix}\xi_+\\ \xi_0\\ \xi_-\end{pmatrix}
 :=\Pi^\top g,
\end{equation*}
with $\xi_+ \in \R^{n_+}$ and $\xi_- \in \R^{n_-}.$ Since the transport part of \(\mathcal A^*\) in
\eqref{eq:standard-L2-adjoint} has the opposite sign to that of
\(\mathcal A\) in \eqref{eq:homogeneous-realization-intro}, the roles of
the boundary variables are reversed: for \(\mathcal A^*\), \(\xi_+\) is
the incoming variable, whereas \(\xi_-\) is the outgoing variable. The
boundary pairing is
\begin{equation}\label{eq:standard-boundary-pairing}
 f^\top A_\nu^\top g
 =\zeta_+^\top\Lambda_+\xi_+
 +\zeta_-^\top\Lambda_-\xi_-.
\end{equation}
Thus, for \(f\in D(\mathcal A)\) and \(g\in D(\mathcal A^*)\), the
divergence theorem and the boundary conditions give
\begin{equation*}
   \braket{\mathcal A f,g}_{\dbX}
 -\braket{f,\mathcal A^*g}_{\dbX}
 =-\int_\Gamma f^\top A_\nu^\top g\dd\sigma=0.
\end{equation*}
To analyze the boundary behavior of \(\mathcal A^*\), we first show
that the dual boundary condition inherits the dissipativity of the
original one.
\begin{lemma}
\label{lem:dual-boundary-dissipativity}
Under Assumption~\ref{ass:boundary-dissipativity}, after decreasing
\(\delta\) if necessary, we have
\begin{equation}\label{eq:inverse-boundary-dissipation}
 N^{-1}-KM^{-1}K^\top\ge\delta I_{n_-},
\end{equation}
where \(I_{n_-}\) denotes the \(n_-\times n_-\) identity matrix.
Consequently, given \(z\in\R^{n_+}\), if \(\Lambda_+\xi_++K^\top\Lambda_-\xi_-=z\), then, after decreasing
\(\delta\) if necessary, there exists \(C>0\) such that
\begin{equation} \label{eq:dualdissipation}
  g^\top E^{-1}A_\nu^\top g\le C\abs{z}^2
  -\delta\abs{\xi_-}^2.
\end{equation}
\end{lemma}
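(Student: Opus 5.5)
The plan is to treat the two claims separately: first a purely algebraic (Schur-complement type) transfer of \eqref{eq:strict-dissipativity-K} to the dual matrix inequality \eqref{eq:inverse-boundary-dissipation}, then a direct computation of the adjoint energy flux in the adjoint characteristic variables.

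\emph{Step 1 (matrix inequality).} On each face, \(M\) and \(N\) are symmetric positive definite. They are bounded above and below uniformly, since \(\Lambda_\pm\) and \(X_\pm\) are, the latter because \(E\) is uniformly positive definite and bounded and \(\Pi\) takes finitely many values. Let \(m:=\sup_\Gamma\abs{M}\) and \(\varepsilon:=\delta/m\in(0,1]\). From \(M-K^\top NK\ge\delta I\ge\varepsilon M\) I get
\[
K^\top NK\le(1-\varepsilon)M .
\]
Equivalently, \(\norm{N^{1/2}KM^{-1/2}}\le\sqrt{1-\varepsilon}\). Since a matrix and its transpose have the same operator norm, \(\norm{M^{-1/2}K^\top N^{1/2}}\le\sqrt{1-\varepsilon}\), which gives \(KM^{-1}K^\top\le(1-\varepsilon)N^{-1}\). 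Hence
\[
N^{-1}-KM^{-1}K^\top\ge\varepsilon N^{-1}\ge\delta' I_{n_-},
\]
using the uniform upper bound on \(N\). As by-products, \(K^\top NK\le M\) shows that \(K\) is uniformly bounded, and \(KM^{-1}K^\top\) is uniformly bounded as well.

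\emph{Step 2 (flux in adjoint variables).} Write \(g=\Pi^{-\top}\xi\) and \(A_\nu^\top=\Pi^{-\top}\Lambda\Pi^\top\). Then
\[
g^\top E^{-1}A_\nu^\top g=\xi^\top(\Pi^\top E\Pi)^{-1}\Lambda\xi=\xi_+^\top X_+^{-1}\Lambda_+\xi_++\xi_-^\top X_-^{-1}\Lambda_-\xi_- .
\]
From \(M=X_+\Lambda_+\) and \(N=-X_-\Lambda_-\) we have \(X_+^{-1}=\Lambda_+M^{-1}\) and \(X_-^{-1}=-\Lambda_-N^{-1}\). Setting \(\eta_\pm:=\Lambda_\pm\xi_\pm\), the flux becomes
\[
\eta_+^\top M^{-1}\eta_+-\eta_-^\top N^{-1}\eta_- .
\]

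\emph{Step 3 (use the boundary relation).} The constraint reads \(\eta_+=z-K^\top\eta_-\). Expanding and applying Young's inequality with a parameter \(\theta>0\),
\[
\eta_+^\top M^{-1}\eta_+\le(1+\theta)\,\eta_-^\top KM^{-1}K^\top\eta_-+C_\theta\abs{z}^2 .
\]
Therefore
\[
g^\top E^{-1}A_\nu^\top g\le C_\theta\abs{z}^2-\eta_-^\top\bigl(N^{-1}-KM^{-1}K^\top\bigr)\eta_-+\theta\,\eta_-^\top KM^{-1}K^\top\eta_- .
\]
By Step 1 and the uniform bound on \(KM^{-1}K^\top\), choosing \(\theta\) small gives the bound \(C\abs{z}^2-\tfrac{\delta'}{2}\abs{\eta_-}^2\). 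Finally, \(\abs{\eta_-}\ge c\abs{\xi_-}\) because the eigenvalues of \(\Lambda_-\) are uniformly bounded away from zero. This yields \eqref{eq:dualdissipation} after decreasing \(\delta\).

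The main obstacle I expect is Step 1: obtaining a \emph{uniform} lower bound rather than mere positive definiteness. The fix is to convert \(\delta\) into the relative margin \(\varepsilon M\) and use the norm-of-transpose argument; this requires the uniform two-sided bounds on \(M\) and \(N\) coming from the finitely many faces.
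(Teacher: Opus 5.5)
Your proposal is correct and follows essentially the same route as the paper. Bounding $\norm{R^\top}$ for $R=N^{1/2}KM^{-1/2}$ through its transpose is the paper's observation that $R^\top R$ and $RR^\top$ share their nonzero eigenvalues, and your Steps~2--3 repeat the paper's flux computation, the substitution $\Lambda_+\xi_+=z-K^\top\Lambda_-\xi_-$, and Young's inequality, with $\varepsilon=\delta/m$ only making explicit the uniformity in $x$ that the paper leaves implicit.
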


\begin{proof}
Set $
 R:=N^{1/2}KM^{-1/2}$ and we have $
 M-K^\top NK
 =M^{1/2}(I-R^\top R)M^{1/2}.$ 
By \eqref{eq:strict-dissipativity-K}, $I-R^\top R$ is strictly positive definite. Note that the nonzero eigenvalues of
\(R^\top R\) and \(RR^\top\) coincide; hence $I-RR^\top$ is also strictly positive definite. Finally, after decreasing  $\delta$ if necessary, we have
$$
 N^{-1}-KM^{-1}K^\top
 =N^{-1/2}(I-RR^\top)N^{-1/2} \geq \delta I_{n_-}.
$$

For the last assertion, we have
\begin{align*}
g^\top E^{-1}A_\nu^\top g
&=(\Pi^\top g)^\top
  (\Pi^{-1}E^{-1}\Pi^{-\top})
  (\Pi^\top A_\nu^\top\Pi^{-\top})
  (\Pi^\top g)\\
&=(\Lambda_+\xi_+)^\top M^{-1}(\Lambda_+\xi_+)
  -(\Lambda_-\xi_-)^\top
  N^{-1}(\Lambda_-\xi_-)\\
&=\bigl(z-K^\top\Lambda_-\xi_-\bigr)^\top
  M^{-1}
  \bigl(z-K^\top\Lambda_-\xi_-\bigr)
  -(\Lambda_-\xi_-)^\top
  N^{-1}(\Lambda_-\xi_-)\\
&=z^\top M^{-1}z
  -2z^\top M^{-1}K^\top\Lambda_-\xi_-
  +(\Lambda_-\xi_-)^\top
  (KM^{-1}K^\top-N^{-1})
  (\Lambda_-\xi_-)\\
&\le C\abs{z}^2
  -\delta\abs{\xi_-}^2.
\end{align*}
The last inequality follows from
\eqref{eq:inverse-boundary-dissipation} and Young's inequality, after
decreasing \(\delta\) if necessary.
\end{proof}

Finally, we record the following well-posedness result without proof, which follows
by adapting the arguments in \cite{HuangTemam2014} to the present
polyhedral domain.
\begin{theorem}\label{thm:wellposedness}
Under Assumptions~\ref{ass:stabilizing-symmetrizer} and
\ref{ass:boundary-dissipativity}, \(\mathcal A\) generates a strongly
continuous semigroup \(\bigl(e^{t\mathcal A}\bigr)_{t\ge0}\) on
\(\dbX\). For every \(T>0\), \(f_0\in\dbX\), and
\(u\in L^2(0,T;\dbU)\), problem \eqref{eq:evolution-system} admits a
unique mild solution \(f\in C([0,T];\dbX)\), and the corresponding
observation satisfies \(\zeta_+\in L^2(0,T;\dbY)\). Moreover, there exists a
constant \(C_T>0\), possibly depending on \(T\), such that
\begin{equation}\label{eq:dynamic-wellposedness-estimate}
 \sup_{0\leq t\leq T}\norm{f(t)}_{\dbX}^2
 +\norm{\zeta_+}_{L^2(0,T;\dbY)}^2
 \leq C_T\left(
 \norm{f_0}_{\dbX}^2
 +\norm{u}_{L^2(0,T;\dbU)}^2\right).
\end{equation}
\end{theorem}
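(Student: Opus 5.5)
The plan is to run the standard Lumer--Phillips argument in a weighted Hilbert space and then treat the inhomogeneous boundary data by lifting or by a priori energy estimates plus density. We equip \(\dbX\) with the equivalent inner product \(\braket{f,g}_E:=\int_\Omega f^\top E g\dd x\). This norm is equivalent to the \(L^2\) norm because \(E\) is continuous and uniformly positive definite on \(\overline\Omega\).

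First we show that \(\mathcal A-\omega I\) is dissipative for some \(\omega\) (in fact \(\omega<0\) works). For smooth \(f\) with \(\zeta_-=K\zeta_+\), the symmetry of \(EA_j\) and the divergence theorem give
\[
2\braket{\mathcal Af,f}_E=\int_\Omega f^\top\Bigl(Q^\top E+EQ+\sum_j\partial_{x_j}(EA_j)\Bigr)f\dd x-\int_\Gamma f^\top EA_\nu f\dd\sigma .
\]
By \eqref{eq:E-internal-dissipation} and \eqref{eq:forward-boundary-algebra} with \(u=0\), this quantity is at most \(-C\norm f^2-\delta\norm{\zeta_+}^2_{\dbY}\).

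To pass from smooth \(f\) to all of \(D(\mathcal A)\), we need two ingredients:
\begin{enumerate}[(i)]
\item boundary traces of the \(\zeta_\pm\) are meaningful in \(L^2(\Gamma)\) for \(f\in D(\mathcal A)\) with dissipative boundary condition;
\item smooth functions satisfying the boundary condition are dense in the graph norm.
\end{enumerate}
This is the Friedrichs/Rauch weak-equals-strong theory, adapted facewise to the polyhedron as in \cite{HuangTemam2014}.

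By the same computation using Proposition~\ref{prop:inverse-adjoint-symmetrizer} and \eqref{eq:dualdissipation} with \(z=0\), the adjoint \(\mathcal A^*\) is dissipative in the dual weight \(E^{-1}\). Equivalently, \(\mathcal A^*\) is dissipative with respect to the \(E\)-adjoint, whose boundary condition is the one described for \(D(\mathcal A^*)\). A closed, densely defined operator \(\mathcal A\) for which both \(\mathcal A\) and its adjoint are dissipative is maximally dissipative (range condition via closed range and trivial kernel of \(\lambda-\mathcal A^*\)). Lumer--Phillips then yields a contraction semigroup, in fact an exponentially stable one, on \((\dbX,\braket{\cdot,\cdot}_E)\).

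For the inhomogeneous problem we first take \(u\in C_c^1((0,T);\dbU)\) smooth in \(x\) and \(f_0\in D(\mathcal A)\). We construct a classical solution by lifting \(u\): choose \(w(t)\) with smooth extension into \(\Omega\) whose incoming trace is \(u\). Then \(f-w\) solves a homogeneous-boundary problem with source \(\mathcal Aw-\dot w\), and the semigroup handles it. Along this solution the identity above gives
\[
\frac{d}{dt}\norm{f}_E^2\le -C\norm f^2-\delta\norm{\zeta_+}^2+C\norm u^2 .
\]
Integrating in time yields \eqref{eq:dynamic-wellposedness-estimate}, even with a constant independent of \(T\). Density of such data in \(\dbX\times L^2(0,T;\dbU)\), together with this a priori bound, extends the solution map continuously. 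The limit is the unique mild solution in \(C([0,T];\dbX)\), with \(\zeta_+\in L^2(0,T;\dbY)\) defined as the \(L^2\) limit of traces. Uniqueness follows from the energy estimate applied to the difference of two solutions with zero data, or equivalently from the duality formulation with \(\mathcal A^*\).

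The main obstacle is ingredient (i) together with ingredient (ii) in the polyhedral setting. Traces of graph-norm functions are only in weak spaces in general, characteristic faces (\(\zeta_0\)) are allowed, and the corners of the domain spoil smooth approximation. We would first mollify tangentially on each face via Friedrichs' lemma and localize with a partition of unity. The strict boundary dissipativity \eqref{eq:strict-dissipativity-K} is what allows controlling \(\zeta_+\) (hence \(\zeta_-\)) in \(L^2\) uniformly along approximations. Near edges where faces meet, this uniform control should suffice to close the argument, and it is exactly the part where we would rely on adapting \cite{HuangTemam2014}.
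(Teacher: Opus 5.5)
Your proposal is in line with the paper, which records this theorem without proof and only says it follows by adapting the arguments of \cite{HuangTemam2014} to the polyhedral domain. You supply the standard skeleton (dissipativity of \(\mathcal A\) in the \(E\)-weighted inner product and of \(\mathcal A^*\) in the \(E^{-1}\)-weighted one, Lumer--Phillips, then lifting, the energy inequality that reappears as Lemma~\ref{lem:forward-energy-estimate}, and density), and you defer exactly the same trace and corner/density issues to that reference.

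The lifting step needs tightening. Take \(u\) smooth in time (say \(C_c^\infty\) rather than \(C_c^1\)), so that the lifted source is \(C^1\) in time and the lifted problem has a classical solution. Also take \(u\) compactly supported in the relative interiors of the faces, so that a smooth \(w\) exists with block form \(\Pi^{-1}w=(0,0,u)\) near each face and \(w=0\) near the edges; this gives \(\zeta_-(w)-K\zeta_+(w)=u\). This class is still dense in \(L^2(0,T;\dbU)\).
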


\begin{remark}
Lemma~\ref{lem:forward-energy-estimate} below shows that the constant
\(C_T\) in \eqref{eq:dynamic-wellposedness-estimate} can be chosen
independently of \(T\). In the terminology of control theory
\cite{TucsnakWeiss2009}, \(\mathcal C\) is an infinite-time
\(L^2\)-admissible observation operator, since
\begin{equation*}
 \int_0^T\norm{\mathcal C e^{t\mathcal A}f_0}_{\dbY}^2\dd t=\norm{\zeta_+}_{L^2(0,T;\dbY)}^2
 \leq C\norm{f_0}_{\dbX}^2,
 \qquad f_0\in D(\mathcal A).
\end{equation*}
 Since Theorem~\ref{thm:wellposedness} also applies to the adjoint system and
\(\xi_-\) is the outgoing variable for this system, the corresponding
 estimate gives
\begin{equation*}
 \int_0^T\norm{\mathcal B^*e^{t\mathcal A^*}g_0}_{\dbU}^2\dd t=\norm{-\Lambda_-\xi_-}^2_{L^2(0,T;\dbU)}
 \leq C\norm{g_0}_{\dbX}^2,
 \qquad g_0\in D(\mathcal A^*),
\end{equation*}
which implies that \(\mathcal B\) is an infinite-time \(L^2\)-admissible control
operator.
\end{remark}

\subsection{Forward and adjoint energy estimates}
\label{subsec:energy-estimates}

Using the internal and boundary dissipation established above, we now
derive energy estimates for the forward and adjoint systems. To this
end, for \(f,g\in\dbX\), we define the weighted energies
\begin{equation*}
 \mathcal E(f):=\int_\Omega f^\top Ef\dd x,
 \qquad
 \mathcal E^*(g):=\int_\Omega g^\top E^{-1}g\dd x.
\end{equation*}
Both energies are equivalent to the squared \(\dbX\)-norm. For system \eqref{eq:evolution-system}, we have the following estimate.
\begin{lemma}\label{lem:forward-energy-estimate}
Given \(f_0\in\dbX\) and \(u\in L^2(0,T;\dbU)\), let \(f \in C([0,T];\dbX)\) solve \eqref{eq:evolution-system}. Then the outgoing variable \(\zeta_+\) satisfies
\begin{equation}\label{eq:forward-uniform-estimate}
\begin{aligned}
 \norm{f(T)}_{\dbX}^2
 +\norm{f}_{L^2(0,T;\dbX)}^2
 +\norm{\zeta_+}_{L^2(0,T;\dbY)}^2
 \le C\left(
 \norm{f_0}_{\dbX}^2
 +\norm{u}_{L^2(0,T;\dbU)}^2\right),
\end{aligned}
\end{equation}
where \(C\) is independent of \(T\).
\end{lemma}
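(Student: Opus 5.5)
The argument is a weighted energy identity for the energy \(\mathcal E(f(t))\) built from the stabilizing symmetrizer \(E\), followed by integration in time. No Gronwall step is used, so no \(T\)-dependent constants appear.

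\textbf{Smooth data.} We first take smooth data: \(f_0\in D(\mathcal A)\) and \(u\) smooth in time with values in a dense subspace of \(\dbU\), compatible at \(t=0\). The solution is then regular enough that \(t\mapsto\mathcal E(f(t))\) is differentiable and the divergence theorem applies on the polyhedral domain, facewise. Using \(f_t=-\sum_j A_jf_{x_j}+Qf\) and the symmetry of \(EA_j\), we get
\[
 \frac{\dd}{\dd t}\mathcal E(f)
 =\int_\Omega f^\top\Bigl(Q^\top E+EQ+\sum_{j=1}^d\partial_{x_j}(EA_j)\Bigr)f\dd x
 -\int_\Gamma f^\top EA_\nu f\dd\sigma .
\]
This comes from writing \(2f^\top EA_jf_{x_j}=\partial_{x_j}(f^\top EA_jf)-f^\top\partial_{x_j}(EA_j)f\) and integrating by parts.

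Assumption~\ref{ass:stabilizing-symmetrizer} bounds the bulk term by \(-c\norm{f}_{\dbX}^2\). Since \(\zeta_-=K\zeta_++u\) on \(\Gamma\), the boundary inequality \eqref{eq:forward-boundary-algebra}, which is a consequence of Assumption~\ref{ass:boundary-dissipativity}, bounds the boundary term by \(-\delta\norm{\zeta_+}_{\dbY}^2+C\norm{u}_{\dbU}^2\). Hence
\[
 \frac{\dd}{\dd t}\mathcal E(f(t))+c\norm{f(t)}_{\dbX}^2+\delta\norm{\zeta_+(t)}_{\dbY}^2\le C\norm{u(t)}_{\dbU}^2 .
\]

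\textbf{Integration in time.} We integrate this over \((0,T)\) and use the norm equivalence \(c_0\norm{g}_{\dbX}^2\le\mathcal E(g)\le C_0\norm{g}_{\dbX}^2\), which holds because \(E\) is uniformly positive definite and bounded on \(\overline\Omega\). This gives \eqref{eq:forward-uniform-estimate} directly. The constants depend only on \(E\), \(\delta\), \(K\) and the bounds on \(\Lambda_\pm\), so they are independent of \(T\).

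\textbf{General data: the main obstacle.} The delicate step is justifying the computation for general \(f_0\in\dbX\) and \(u\in L^2(0,T;\dbU)\). For such data the boundary trace of \(f\) is not a priori meaningful pointwise in time, and the polyhedral boundary has edges and corners.

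Our plan is to use density together with the well-posedness result of Theorem~\ref{thm:wellposedness}. We approximate \((f_0,u)\) by smooth compatible data \((f_0^k,u^k)\) converging in \(\dbX\times L^2(0,T;\dbU)\). Estimate \eqref{eq:dynamic-wellposedness-estimate}, even with its \(T\)-dependent constant \(C_T\), shows that on the fixed interval \([0,T]\):
\begin{itemize}
\item \(f^k\to f\) in \(C([0,T];\dbX)\);
\item \(\zeta_+^k\to\zeta_+\) in \(L^2(0,T;\dbY)\).
\end{itemize}
Since the uniform inequality holds for each \(k\) with a \(T\)-independent \(C\), we pass to the limit in it to conclude.

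For the smooth approximants themselves, the energy identity can be justified as follows. Integrate by parts over \(\Omega\) using Green's formula for \(f\in\{f\in\dbX:\mathcal Af\in\dbX\}\) with an \(L^2\) trace of the characteristic components, which is facewise valid away from lower-dimensional edges of measure zero. Alternatively, the identity can be taken directly from the construction in \cite{HuangTemam2014}, which underlies Theorem~\ref{thm:wellposedness}.
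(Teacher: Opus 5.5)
Your proposal is correct and follows the paper's proof. Both compute the energy identity for $\mathcal E(f)$ using the symmetry of $EA_j$, bound the bulk term by \eqref{eq:E-internal-dissipation} and the boundary term by \eqref{eq:forward-boundary-algebra} to reach \eqref{eq:forward-complete-energy}, then integrate over $(0,T)$ using norm equivalence. The paper's remark only sketches the density step for general data, whereas you spell it out via the $T$-dependent estimate \eqref{eq:dynamic-wellposedness-estimate} on the fixed interval. One minor imprecision: taking $f_0\in D(\mathcal A)$ makes compatibility force $u(0)=0$, but this is harmless since such $u$ are still dense in $L^2(0,T;\dbU)$.
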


\begin{remark}
Note that $f$ is only the mild solution. The calculation below could be first carried out for smooth compatible
data \((f_0,u)\), for which the corresponding solution has enough
regularity to justify integration by parts. The general case follows by a standard density argument; see
\cite[Section~2.1.3, p.~64]{BastinCoron2016}.
\end{remark}

\begin{proof}
By symmetry of \(EA_j\) and integration by
parts, we have
\begin{equation*}
\begin{aligned}
 \frac{\dd}{\dd t}\mathcal E(f(t))
 =\int_\Omega f^\top
 \left[
 Q^\top E+EQ+\sum_{j=1}^d(EA_j)_{x_j}
 \right]f\dd x-\int_\Gamma
 \left(\zeta_+^\top M\zeta_+
 -\zeta_-^\top N\zeta_-\right)\dd\sigma.
\end{aligned}
\end{equation*}
Using \eqref{eq:E-internal-dissipation},
\(\zeta_-=K\zeta_++u\), and
\eqref{eq:forward-boundary-algebra}, we have
\begin{equation}\label{eq:forward-complete-energy}
 \frac{\dd}{\dd t}\mathcal E(f(t))
 +c\norm{f(t)}_{\dbX}^2
 +c\norm{\zeta_+(t)}_{\dbY}^2
 \le C\norm{u(t)}_{\dbU}^2.
\end{equation}
Integrating \eqref{eq:forward-complete-energy} over \((0,T)\) and using the norm equivalence of $\mathcal{E}(f(t))$
yields
\eqref{eq:forward-uniform-estimate}.
\end{proof}

For the turnpike analysis, we also need an estimate for the
inhomogeneous backward adjoint system
\begin{equation}\label{eq:dynamic-dual-general-estimate}
\begin{cases}
 g_t+\displaystyle\sum_{j=1}^dA_j^\top g_{x_j}
 =-Q^\top g-q,
 &(t,x)\in(0,T)\times\Omega,\\[1mm]
 \Lambda_+\xi_++K^\top\Lambda_-\xi_-=z,
 &(t,x)\in(0,T)\times\Gamma,\\[1mm]
 g(T)=g_{T},
\end{cases}
\qquad \xi=\Pi^\top g=\begin{pmatrix}
  \xi_+ \\
  \xi_0\\
  \xi_-
\end{pmatrix},
\end{equation}
where \(q\in L^2(0,T;\dbX)\), \(z\in L^2(0,T;\dbY)\), and
\(g_{T}\in\dbX\). After reversing time, 
Theorem~\ref{thm:wellposedness} gives its
well-posedness.
\begin{lemma}
\label{lem:backward-energy-estimate}
Given \(q\in L^2(0,T;\dbX)\), \(z\in L^2(0,T;\dbY)\) and
\(g_{T}\in\dbX\), let \((g,\xi_-)\) solve \eqref{eq:dynamic-dual-general-estimate}. Then
\begin{equation}\label{eq:backward-uniform-estimate}
\begin{aligned}
 \norm{g(0)}_{\dbX}^2
 &+\norm{g}_{L^2(0,T;\dbX)}^2
 +\norm{\xi_-}_{L^2(0,T;\dbU)}^2\le C\left(
 \norm{g_{T}}_{\dbX}^2
 +\norm{q}_{L^2(0,T;\dbX)}^2
 +\norm{z}_{L^2(0,T;\dbY)}^2\right),
\end{aligned}
\end{equation}
where \(C\) is independent of \(T\).
\end{lemma}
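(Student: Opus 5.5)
The plan mirrors the proof of Lemma~\ref{lem:forward-energy-estimate}, using the adjoint energy \(\mathcal E^*(g)\), the inverse symmetrizer from Proposition~\ref{prop:inverse-adjoint-symmetrizer}, and the dual boundary dissipation from Lemma~\ref{lem:dual-boundary-dissipativity}. As in the remark after Lemma~\ref{lem:forward-energy-estimate}, the computation is first done for smooth compatible data \((g_T,q,z)\); the general case then follows by density, using the well-posedness given by Theorem~\ref{thm:wellposedness} after time reversal.

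\textbf{Step 1: the energy identity.} Substituting \(g_t=-\sum_j A_j^\top g_{x_j}-Q^\top g-q\), we compute \(\frac{\dd}{\dd t}\mathcal E^*(g(t))=2\int_\Omega g^\top E^{-1}g_t\dd x\). The transport term needs integration by parts. Since \(E^{-1}A_j^\top\) is symmetric,
\[
-2\int_\Omega g^\top E^{-1}A_j^\top g_{x_j}\dd x=\int_\Omega g^\top \partial_{x_j}(E^{-1}A_j^\top)g\dd x-\int_\Gamma \nu_j\, g^\top E^{-1}A_j^\top g\dd\sigma .
\]
This yields
\[
\frac{\dd}{\dd t}\mathcal E^*(g)=-\int_\Omega g^\top\Bigl[QE^{-1}+E^{-1}Q^\top-\sum_j\partial_{x_j}(E^{-1}A_j^\top)\Bigr]g\dd x-2\int_\Omega g^\top E^{-1}q\dd x-\int_\Gamma g^\top E^{-1}A_\nu^\top g\dd\sigma .
\]
The signs here are the main point to get right: the time direction is reversed relative to the forward problem. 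The bulk term should become \emph{coercive} when we integrate backward in time, which is what we verify next.

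\textbf{Step 2: bounds on each term.} By \eqref{eq:inverse-internal-dissipation}, the bulk term is at least \(c\norm{g}_{\dbX}^2\). By \eqref{eq:dualdissipation} with the boundary datum \(z\), the boundary term satisfies
\[
-\int_\Gamma g^\top E^{-1}A_\nu^\top g\dd\sigma\ge \delta\norm{\xi_-}_{\dbU}^2-C\norm{z}_{\dbY}^2 .
\]
For the source term, Young's inequality gives
\[
\Bigl|2\int_\Omega g^\top E^{-1}q\dd x\Bigr|\le \tfrac c2\norm{g}_{\dbX}^2+C\norm{q}_{\dbX}^2 .
\]
Combining these three bounds, we obtain
\[
-\frac{\dd}{\dd t}\mathcal E^*(g(t))+c\norm{g(t)}_{\dbX}^2+c\norm{\xi_-(t)}_{\dbU}^2\le C\bigl(\norm{q(t)}_{\dbX}^2+\norm{z(t)}_{\dbY}^2\bigr).
\]

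\textbf{Step 3: integration and conclusion.} Integrate this inequality over \((0,T)\). The left side produces \(\mathcal E^*(g(0))-\mathcal E^*(g_T)\). Using the equivalence of \(\mathcal E^*\) with \(\norm{\cdot}_{\dbX}^2\) (since \(E^{-1}\) is uniformly positive definite and bounded), we get \eqref{eq:backward-uniform-estimate}. All constants come from Assumptions~\ref{ass:stabilizing-symmetrizer} and \ref{ass:boundary-dissipativity} and from the bounds on \(\Lambda_\pm\), \(\Pi\), so they are independent of \(T\).

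The main obstacle is Step 1. One must track the sign conventions of the adjoint boundary pairing so that the boundary flux \(g^\top E^{-1}A_\nu^\top g\) enters with exactly the sign that Lemma~\ref{lem:dual-boundary-dissipativity} controls, namely dissipating the outgoing adjoint variable \(\xi_-\) rather than \(\xi_+\). Once the identity is written in the form above, the remaining steps are routine.
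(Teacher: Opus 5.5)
Your proposal is correct and follows essentially the same route as the paper: the same $E^{-1}$-weighted energy identity, estimated with \eqref{eq:inverse-internal-dissipation}, \eqref{eq:dualdissipation} and Young's inequality to reach \eqref{eq:backward-complete-energy}, then integrated over $(0,T)$ using the equivalence of $\mathcal E^*$ with the squared $\dbX$-norm. Your signs agree with the paper's identity for $-\frac{\dd}{\dd t}\mathcal E^*(g)$, since you simply write it for $\frac{\dd}{\dd t}\mathcal E^*(g)$, and your density remark matches the one the paper makes for the forward estimate.
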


\begin{proof}
By symmetry of \(E^{-1}A_j^\top\) and integration by
parts, we have
\begin{equation*}
\begin{aligned}
 -\frac{\dd}{\dd t}\mathcal E^*(g(t))
 &=\int_\Omega g^\top
 \left[
 QE^{-1}+E^{-1}Q^\top
 -\sum_{j=1}^d(E^{-1}A_j^\top)_{x_j}
 \right]g\dd x\\
 &\quad+\int_\Gamma g^\top E^{-1}A_\nu^\top g\dd\sigma
 +2\int_\Omega g^\top E^{-1}q\dd x.
\end{aligned}
\end{equation*}
Using \eqref{eq:inverse-internal-dissipation},
\eqref{eq:dualdissipation}, and Young's inequality, we have
\begin{equation}\label{eq:backward-complete-energy}
 -\frac{\dd}{\dd t}\mathcal E^*(g(t))
 +c\norm{g(t)}_{\dbX}^2
 +c\norm{\xi_-(t)}_{\dbU}^2
 \le C\left(
 \norm{q(t)}_{\dbX}^2+\norm{z(t)}_{\dbY}^2\right).
\end{equation}
Integrating \eqref{eq:backward-complete-energy} over \((0,T)\) and using the norm equivalence of $\mathcal{E}^*(g(t))$ 
yields
\eqref{eq:backward-uniform-estimate}. 
\end{proof}

Finally, we study the pairing \(\mathcal P[f(t),g(t)]:=\ip{f(t)}{g(t)}_{\dbX}\).
\begin{lemma}
\label{lem:dynamic-green-identity}
Let \(f\) solve \eqref{eq:evolution-system}, and let the pair \((g,\xi_-)\) solve
\eqref{eq:dynamic-dual-general-estimate}. Then
\begin{equation}\label{eq:cross-green-integrated}
\begin{aligned}
\mathcal P[f(0),g(0)]
 -\mathcal P[f(T),g(T)]=\int_0^T\bigl(
 \ip{f(t)}{q(t)}_{\dbX}
 +\ip{\zeta_+(t)}{z(t)}_{\dbY}
 +\ip{u(t)}{\Lambda_-\xi_-(t)}_{\dbU}
 \bigr)\dd t.
\end{aligned}
\end{equation}
\end{lemma}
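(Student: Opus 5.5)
We first assume smooth compatible data \((f_0,u)\) and \((g_T,q,z)\), so that both solutions are regular enough to justify integration by parts, exactly as in the remark preceding the proof of Lemma~\ref{lem:forward-energy-estimate}. Identity \eqref{eq:cross-green-integrated} then extends to general data by density. Indeed, both sides are continuous with respect to the data in the relevant norms: \(f\in C([0,T];\dbX)\) and \(\zeta_+\in L^2(0,T;\dbY)\) by Theorem~\ref{thm:wellposedness}, and \(g\in C([0,T];\dbX)\) and \(\xi_-\in L^2(0,T;\dbU)\) by its time-reversed version together with Lemma~\ref{lem:backward-energy-estimate}.

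For smooth solutions, I will differentiate \(\mathcal P[f(t),g(t)]\) in time:
\[
\frac{\dd}{\dd t}\ip{f}{g}_{\dbX}
=\int_\Omega\Bigl(-\sum_j A_jf_{x_j}+Qf\Bigr)^\top g\dd x
+\int_\Omega f^\top\Bigl(-\sum_jA_j^\top g_{x_j}-Q^\top g-q\Bigr)\dd x .
\]
The \(Q\)-terms cancel. The transport terms combine into \(-\int_\Omega\sum_j\partial_{x_j}(f^\top A_j^\top g)\dd x\); since the \(A_j\) are constant, this equals \(-\int_\Gamma f^\top A_\nu^\top g\dd\sigma\) by the divergence theorem. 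Hence
\[
-\frac{\dd}{\dd t}\mathcal P[f,g]=\ip{f}{q}_{\dbX}+\int_\Gamma f^\top A_\nu^\top g\dd\sigma .
\]

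Next I treat the boundary term using \eqref{eq:standard-boundary-pairing}:
\[
f^\top A_\nu^\top g=\zeta_+^\top\Lambda_+\xi_++\zeta_-^\top\Lambda_-\xi_- .
\]
Substituting \(\zeta_-=K\zeta_++u\) gives
\[
\zeta_+^\top\bigl(\Lambda_+\xi_++K^\top\Lambda_-\xi_-\bigr)+u^\top\Lambda_-\xi_-=\zeta_+^\top z+u^\top\Lambda_-\xi_-,
\]
where the last step uses the adjoint boundary condition in \eqref{eq:dynamic-dual-general-estimate}. Integrating over \(\Gamma\) facewise produces \(\ip{\zeta_+}{z}_{\dbY}+\ip{u}{\Lambda_-\xi_-}_{\dbU}\). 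Integrating the resulting identity over \((0,T)\) then yields \eqref{eq:cross-green-integrated}.

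The algebra is routine. The main obstacle is justifying the integration by parts and the density step, because \(\mathcal B\) and \(\mathcal C\) are unbounded and the traces \(\zeta_\pm\) and \(\xi_\pm\) are not controlled by the \(\dbX\)-norm alone. I would construct smooth approximations of \((f_0,u)\) and \((g_T,q,z)\) satisfying the compatibility conditions, apply the identity to them, and pass to the limit. Passing to the limit requires the boundary terms to converge. This follows from the trace estimates: \(\zeta_+\) is controlled via \eqref{eq:dynamic-wellposedness-estimate}, \(\xi_-\) via \eqref{eq:backward-uniform-estimate}, and \(u\) and \(z\) converge by assumption. The boundary integral is then a continuous bilinear expression in \(L^2\) traces, so the limit is legitimate.
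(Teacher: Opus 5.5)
Your proposal is correct and follows the paper's proof essentially step for step. You differentiate \(\mathcal P[f(t),g(t)]\), cancel the \(Q\)-terms, turn the transport terms into \(-\int_\Gamma f^\top A_\nu^\top g\dd\sigma\), and rewrite the boundary integrand via \eqref{eq:standard-boundary-pairing} and the two boundary conditions as \(\zeta_+^\top z+u^\top\Lambda_-\xi_-\); your density argument, using \eqref{eq:dynamic-wellposedness-estimate} and \eqref{eq:backward-uniform-estimate} for continuity of both sides, just spells out the regularization step the paper leaves to the remark before Lemma~\ref{lem:forward-energy-estimate}.
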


\begin{proof}
Differentiating
\(\mathcal P[f(t),g(t)]\) and using the forward and adjoint equations
give
\[
 \frac{\dd}{\dd t}\mathcal P[f(t),g(t)]
 =-\int_\Gamma f^\top A_\nu^\top g\dd\sigma
  -\int_\Omega f^\top q\dd x,
\]
because the interior \(Q\)-terms cancel. By
\eqref{eq:standard-boundary-pairing},
\(\zeta_-=K\zeta_++u\), and
\(\Lambda_+\xi_++K^\top\Lambda_-\xi_-=z\), we have
\[
 f^\top A_\nu^\top g
 =\zeta_+^\top z+u^\top\Lambda_-\xi_-,
\]
and consequently
\begin{equation} \label{eq:dynamic-pairing}
  \frac{\dd}{\dd t}\mathcal P[f(t),g(t)]
 =-\bigl(
 \ip{f(t)}{q(t)}_{\dbX}
 +\ip{\zeta_+(t)}{z(t)}_{\dbY}
 +\ip{u(t)}{\Lambda_-\xi_-(t)}_{\dbU}
 \bigr).
\end{equation}
Integrating \eqref{eq:dynamic-pairing} over \((0,T)\) proves
\eqref{eq:cross-green-integrated}. 
\end{proof}

\subsection{Cost functions}
\label{subsec:cost-assumptions}
For the cost densities \(\Phi=\Phi(x,f): \Omega \times \R^n \to \R\) and \(F=F(x,u,\zeta_+): \Gamma\times \R^{n_-}\times \R^{n_+} \to \R\) in
\eqref{eq:dynamic-cost-overview} and
\eqref{eq:static-cost-overview}, we impose the following assumption.

\begin{assumption}[Convexity and regularity of the costs]
\label{ass:cost}
The function \(\Phi=\Phi(x,f)\) is continuous on
\(\overline\Omega\times\R^n\) and twice continuously differentiable
with respect to \(f\). On each planar face, $F=F(x,u,\zeta_+)$ is continuous
up to the closure of the face and twice continuously differentiable
with respect to $(u,\zeta_+)$. Furthermore, there exists a positive constant \(C\geq1\) such that
\begin{equation*}
 0\leq\Phi_{ff}(x,f)\leq CI_n,
 \qquad
 C^{-1}I_{n_-+n_+}
 \leq D^2_{(u,\zeta_+)}F(x,u,\zeta_+)
 \leq CI_{n_-+n_+}.
\end{equation*}
Here
\(\Phi_{ff}=D_f^2\Phi\) and \(D^2_{(u,\zeta_+)}F\) denote the Hessian
matrices with respect to \(f\) and \((u,\zeta_+)\), respectively.
\end{assumption}
Note that
this assumption naturally extends the classical LQR problem,
in which $F$ is quadratic and its Hessian is a constant positive definite
matrix.
Assumption~\ref{ass:cost} also gives the existence and
uniqueness of the minimizers in \eqref{eq:dynamic-cost-overview} and
\eqref{eq:static-cost-overview}. Indeed, for the dynamic problem \eqref{eq:dynamic-cost-overview},
Theorem~\ref{thm:wellposedness} and Lemma~\ref{lem:forward-energy-estimate} show that, for a fixed initial datum,
the map \(u\mapsto(f,\zeta_+)\) is affine and continuous from
\(L^2(0,T;\dbU)\) into
\(L^2(0,T;\dbX)\times L^2(0,T;\dbY)\). The upper Hessian bounds in
Assumption~\ref{ass:cost} give at most quadratic growth of the cost
densities, so the  functional \(J_T\) is finite and continuous.
Moreover, \(\Phi(x,\cdot)\) is convex and
\(F(x,\cdot,\cdot)\) is uniformly strongly convex. Hence \(J_T\) is
strongly convex in \(u\). It is therefore coercive and weakly lower
semicontinuous. The direct method gives a minimizer, and strong
convexity gives its uniqueness. Similarly,
Theorem~\ref{thm:static-wp} and the same argument give a unique
minimizer of \(J_s\).

\subsection{First-order optimality conditions}
\label{subsec:optimality-conditions}

We first derive the reduced first-order optimality conditions and then
represent the resulting Hilbert-adjoint terms by adjoint boundary
traces. For a fixed initial value \(f_0\in\dbX\), let \(f[u]\) and
\(\zeta_+[u]\) denote the state and observation generated by
\(u\in L^2(0,T;\dbU)\). Moreover, for the zero initial datum \(f_0=0\), we define the following maps
\begin{equation*}
\begin{aligned}
 \mathcal S_T:L^2(0,T;\dbU)&\longrightarrow L^2(0,T;\dbX),
 &\mathcal S_Tu&:=f,\\
 \mathcal G_T:L^2(0,T;\dbU)&\longrightarrow L^2(0,T;\dbY),
 &\mathcal G_Tu&:=\zeta_+.
\end{aligned}
\end{equation*}
Theorem~\ref{thm:wellposedness} shows that both maps are well-defined and bounded. We denote their Hilbert adjoints by
\begin{equation*}
\begin{aligned}
 \mathcal S_T^*:L^2(0,T;\dbX)&\longrightarrow L^2(0,T;\dbU),\qquad
 \mathcal G_T^*:L^2(0,T;\dbY)&\longrightarrow L^2(0,T;\dbU).
\end{aligned}
\end{equation*}
Given the initial datum $f_0$, we fix the dynamic minimizer \(u_T\). Let
\(h\in L^2(0,T;\dbU)\) and \(\varepsilon\in\R\) be arbitrary. By linearity,
\begin{equation*}
 f[u_T+\varepsilon h]=f_T+\varepsilon\mathcal S_Th,
 \qquad
 \zeta_+[u_T+\varepsilon h]=\zeta_{T,+}+\varepsilon\mathcal G_Th.
\end{equation*}

Substituting these identities into
\eqref{eq:dynamic-cost-overview} and applying Taylor's formula gives
\begin{equation*}
\begin{aligned}
 J_T(u_T+\varepsilon h)
 =&\int_0^T\bigg[
 \int_\Omega
 \Phi\bigl(x,f_T+\varepsilon\mathcal S_Th\bigr)\dd x
 +\int_\Gamma F\bigl(x,u_T+\varepsilon h,
 \zeta_{T,+}+\varepsilon\mathcal G_Th\bigr)\dd\sigma
 \bigg]\dd t\\
 =&J_T(u_T)+\varepsilon\int_0^T\bigg[
 \int_\Omega\Phi^\top_f(x,f_T)\mathcal S_Th\dd x \\
 &+\int_\Gamma\bigl(
 F^\top_U(x,u_T,\zeta_{T,+}) h
 +F^\top_{\zeta_+}(x,u_T,\zeta_{T,+})\mathcal G_Th
 \bigr)\dd\sigma
 \bigg]\dd t+O(\varepsilon^2).
\end{aligned}
\end{equation*}
We write \(\Phi_f\), \(F_U\), and \(F_{\zeta_+}\) for the gradients of
\(\Phi\) and \(F\) with respect to \(f\), \(u\), and \(\zeta_+\),
respectively. 
Since \(u_T\) is a minimizer,
\(J_T(u_T+\varepsilon h)-J_T(u_T)\geq0\) for every
\(\varepsilon\in\R\). Hence
\begin{equation*}
\begin{aligned}
 0
 &=\int_0^T\bigl[
 \ip{\Phi_f(f_T)}{\mathcal S_Th}_{\dbX}
 +\ip{F_U(u_T,\zeta_{T,+})}{h}_{\dbU}
 +\ip{F_{\zeta_+}(u_T,\zeta_{T,+})}{\mathcal G_Th}_{\dbY}
 \bigr]\dd t\\
 &=\ip{
 F_U(u_T,\zeta_{T,+})+\mathcal S_T^*\Phi_f(f_T)
 +\mathcal G_T^*F_{\zeta_+}(u_T,\zeta_{T,+})
 }{h}_{L^2(0,T;\dbU)}.
\end{aligned}
\end{equation*}
The upper Hessian bounds in Assumption~\ref{ass:cost} imply that
\(\Phi_f(f_T)\in L^2(0,T;\dbX)\),
\(F_U(u_T,\zeta_{T,+})\in L^2(0,T;\dbU)\) and
\(F_{\zeta_+}(u_T,\zeta_{T,+})\in L^2(0,T;\dbY)\). Since \(h\) is arbitrary, we obtain
\begin{equation}
\label{eq:dynamic-optimality}
 F_U(u_T,\zeta_{T,+})+\mathcal S_T^*\Phi_f(f_T)
 +\mathcal G_T^*F_{\zeta_+}(u_T,\zeta_{T,+})=0
 \qquad\text{in }L^2(0,T;\dbU).
\end{equation}

Similarly, Theorem~\ref{thm:static-wp} defines bounded maps
\begin{equation*}
 \mathcal S_s:\dbU\longrightarrow\dbX,
 \quad \mathcal S_su:=f,
 \qquad
 \mathcal G_s:\dbU\longrightarrow\dbY,
 \quad \mathcal G_su:=\zeta_+,
\end{equation*}
and the optimality condition for \eqref{eq:static-cost-overview} is
\begin{equation}
\label{eq:static-optimality}
 F_U(u_s,\zeta_{s,+})+\mathcal S_s^*\Phi_f(f_s)
 +\mathcal G_s^*F_{\zeta_+}(u_s,\zeta_{s,+})=0
 \qquad\text{in }\dbU.
\end{equation}

The optimality conditions involve the combinations
\(\mathcal S_T^*q+\mathcal G_T^*z\) and
\(\mathcal S_s^*q+\mathcal G_s^*z\). We now represent these  combinations in explicit form
 by adjoint boundary traces.

\begin{proposition}
\label{prop:dynamic-adjoint-representation}
Given \(q\in L^2(0,T;\dbX)\) and
\(z\in L^2(0,T;\dbY)\), let \(g\) solve
\eqref{eq:dynamic-dual-general-estimate} with zero terminal datum, and
let \(\xi_-\) be the outgoing variable of \(\xi=\Pi^\top g\). Then
\begin{equation}
\label{eq:combined-dynamic-adjoint}
 \mathcal S_T^*q+\mathcal G_T^*z
 =-\Lambda_-\xi_-
 \qquad\text{in }L^2(0,T;\dbU).
\end{equation}
Similarly, given \(q\in\dbX\) and \(z\in\dbY\), let
\(g^{\mathrm s}\) be the solution to
\eqref{eq:stationary-adjoint-system} and \(\xi_-^{\mathrm s}\) its
outgoing variable. Then
\begin{equation}
\label{eq:combined-static-adjoint}
 \mathcal S_s^*q+\mathcal G_s^*z=-\Lambda_-\xi_-^{\mathrm s}
 \qquad\text{in }\dbU.
\end{equation}
\end{proposition}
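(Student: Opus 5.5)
The plan is to identify the Hilbert adjoint by testing against an arbitrary control and using the Green identity of Lemma~\ref{lem:dynamic-green-identity}. Fix \(q\in L^2(0,T;\dbX)\), \(z\in L^2(0,T;\dbY)\), and let \(g\) solve \eqref{eq:dynamic-dual-general-estimate} with \(g(T)=0\). By Lemma~\ref{lem:backward-energy-estimate}, \(\xi_-\in L^2(0,T;\dbU)\). Since \(\Lambda_-\) is bounded facewise, \(-\Lambda_-\xi_-\in L^2(0,T;\dbU)\) is well defined. Now take any \(h\in L^2(0,T;\dbU)\) and let \(f=\mathcal S_Th\), \(\zeta_+=\mathcal G_Th\), that is, the solution of \eqref{eq:evolution-system} with \(f_0=0\) and control \(h\).

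Apply \eqref{eq:cross-green-integrated} to this pair. The left-hand side vanishes, because \(f(0)=0\) and \(g(T)=0\). This gives
\begin{equation*}
 0=\ip{\mathcal S_Th}{q}_{L^2(0,T;\dbX)}+\ip{\mathcal G_Th}{z}_{L^2(0,T;\dbY)}
 +\ip{h}{\Lambda_-\xi_-}_{L^2(0,T;\dbU)},
\end{equation*}
so \(\ip{h}{\mathcal S_T^*q+\mathcal G_T^*z}=\ip{h}{-\Lambda_-\xi_-}\) for every \(h\). This yields \eqref{eq:combined-dynamic-adjoint}. The inner product on \(\dbU\) is the piecewise \(L^2\) one, which matches how \(\ip{u}{\Lambda_-\xi_-}_{\dbU}\) appears in the Green identity.

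The main obstacle is justifying Lemma~\ref{lem:dynamic-green-identity} for mild solutions, since it was derived by differentiating \(\mathcal P\) and integrating by parts. I would first prove the identity for smooth compatible data \((h,q,z)\), for which both solutions are regular enough to take boundary traces. I would then pass to the limit by density, as in the remark after Lemma~\ref{lem:forward-energy-estimate}. Every term on the right of \eqref{eq:cross-green-integrated} is continuous in the relevant \(L^2\) norms by Lemmas~\ref{lem:forward-energy-estimate} and \ref{lem:backward-energy-estimate}: the traces \(\zeta_+\) and \(\xi_-\) depend continuously on the data, and \(f(T)\), \(g(0)\) are controlled in \(\dbX\). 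Hence the identity extends to all data.

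For the static statement \eqref{eq:combined-static-adjoint}, I would derive a stationary Green identity in the same way. Let \(f=\mathcal S_sh\) solve \eqref{eq:stationary-system} with control \(h\in\dbU\), and let \(g^{\mathrm s}\) solve \eqref{eq:stationary-adjoint-system}. I expect that system to be \(\sum_j A_j^\top g_{x_j}+Q^\top g=-q\) with \(\Lambda_+\xi_++K^\top\Lambda_-\xi_-=z\). The divergence theorem gives \(0=\int_\Omega\bigl[(\mathcal Af)^\top g-f^\top\mathcal A^*g\bigr]\dd x+\int_\Gamma f^\top A_\nu^\top g\dd\sigma\) with \(\mathcal Af=0\) and \(\mathcal A^*g=-q\). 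By \eqref{eq:standard-boundary-pairing} and the two boundary conditions, this becomes
\begin{equation*}
 \ip{f}{q}_{\dbX}+\ip{\zeta_+}{z}_{\dbY}+\ip{h}{\Lambda_-\xi_-^{\mathrm s}}_{\dbU}=0.
\end{equation*}
The remaining step is identical to the dynamic case. The regularity of the traces is again handled by approximation, using the well-posedness estimates of Theorem~\ref{thm:static-wp} for the state and adjoint stationary problems.
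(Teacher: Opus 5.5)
Your proposal is correct and follows the paper's own proof. You apply the Green identity \eqref{eq:cross-green-integrated} to $f=\mathcal S_Th$, $\zeta_+=\mathcal G_Th$, use $f(0)=0$ and $g(T)=0$ to kill the endpoint terms, and read off the Hilbert adjoint; the static case is handled by the analogous stationary pairing. The density justification and the explicit stationary Green identity are details the paper leaves implicit. For smooth $g$, your stationary identity is exactly the relation used to construct the solution in the proof of Theorem~\ref{thm:static-wp}, so only the passage from smooth $g$ to $g^{\mathrm s}$ needs the approximation you describe.
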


\begin{proof}
We first prove \eqref{eq:combined-dynamic-adjoint}. Let
\(h\in L^2(0,T;\dbU)\), and apply
\eqref{eq:cross-green-integrated} to
\(f=\mathcal S_Th\) and \(\zeta_+=\mathcal G_Th\). Since \(f(0)=0\) and
\(g(T)=0\), we obtain
\begin{equation*}
 \ip{\mathcal S_Th}{q}_{L^2(0,T;\dbX)}
 +\ip{\mathcal G_Th}{z}_{L^2(0,T;\dbY)}
 =\ip{h}{-\Lambda_-\xi_-}_{L^2(0,T;\dbU)}.
\end{equation*}
The definitions of the Hilbert adjoints give
\eqref{eq:combined-dynamic-adjoint}. Similarly,
we have
\begin{equation*}
 \ip{\mathcal S_sh}{q}_{\dbX}+\ip{\mathcal G_sh}{z}_{\dbY}
 =\ip{h}{-\Lambda_-\xi_-^{\mathrm s}}_{\dbU},
 \qquad h\in\dbU,
\end{equation*}
which proves \eqref{eq:combined-static-adjoint}.
\end{proof}

We finally apply these representations to
\eqref{eq:dynamic-optimality} and \eqref{eq:static-optimality}. We write \((g,\xi_-)[q,z]\), where \(g\) is the solution to
\eqref{eq:dynamic-dual-general-estimate} with interior datum \(q\),
boundary datum \(z\), and zero terminal datum, and \(\xi_-\) is its
outgoing variable. Similarly, we write
\((g^{\mathrm s},\xi_-^{\mathrm s})[q,z]\), where
\(g^{\mathrm s}\) is the solution to
\eqref{eq:stationary-adjoint-system} and \(\xi_-^{\mathrm s}\) is its
outgoing variable. Define
\begin{equation}
\label{eq:gT-gs-def}
\begin{aligned}
 (g_T,\xi_{T,-}):=(g,\xi_-)\bigl[\Phi_f(f_T),F_{\zeta_+}(u_T,\zeta_{T,+})\bigr],\quad
 (g_s,\xi_{s,-})
 :=(g^{\mathrm s},\xi_-^{\mathrm s})\bigl[\Phi_f(f_s),F_{\zeta_+}(u_s,\zeta_{s,+})\bigr].
\end{aligned}
\end{equation}

Equation~\eqref{eq:combined-dynamic-adjoint} gives
\begin{equation}
\label{eq:dynamic-boundary-optimality}
 F_U(u_T,\zeta_{T,+})-\Lambda_-\xi_{T,-}=0
 \qquad\text{in }L^2(0,T;\dbU).
\end{equation}
Similarly, \eqref{eq:combined-static-adjoint} gives
\begin{equation}
\label{eq:static-boundary-optimality}
 F_U(u_s,\zeta_{s,+})-\Lambda_-\xi_{s,-}=0
 \qquad\text{in }\dbU.
\end{equation}

\section{Proof of the exponential turnpike}
\label{exp:sec:exponential-turnpike}
The boundary conditions, energy estimates, and optimality conditions
established in Section~\ref{sec:preliminaries} provide the main
ingredients for the turnpike analysis. We now use them to prove the
exponential turnpike property for the dynamic optimal control problem
\eqref{eq:dynamic-cost-overview} under
Assumptions~\ref{ass:stabilizing-symmetrizer},
\ref{ass:boundary-dissipativity}, and \ref{ass:cost}.
To compare the dynamic and static optimal solutions, we introduce the
following deviations:
\[
 \widetilde u:=u_T-u_s,
 \qquad
 \widetilde\zeta_+:=\zeta_{T,+}-\zeta_{s,+},
 \qquad
 \widetilde f:=f_T-f_s.
\]
Subtracting the stationary state system \eqref{eq:stationary-system}
from the dynamic state system \eqref{eq:evolution-system}, we find that,
with \(\widetilde\zeta:=\Pi^{-1}\widetilde f\), the forward deviation
satisfies
\begin{equation}\label{eq:deviation-forward}
\begin{cases}
 \displaystyle
 \widetilde f_t+\sum_{j=1}^dA_j\widetilde f_{x_j}
 =Q\widetilde f,\\[1mm]
 \widetilde\zeta_-=K\widetilde\zeta_++\widetilde u,
 \qquad
 \widetilde y=\widetilde\zeta_+,\\[1mm]
 \widetilde f(0)=f_0-f_s.
\end{cases}
\end{equation}

To represent the adjoint term in \eqref{eq:dynamic-optimality}, we take
\(q=\Phi_f(x,f_T)\) and
\(z=F_{\zeta_+}(x,u_T,\zeta_{T,+})\) in
\eqref{eq:dynamic-dual-general-estimate}, with zero terminal datum.
Similarly, for \eqref{eq:static-optimality}, we take
\(q=\Phi_f(x,f_s)\) and \(z=F_{\zeta_+}(x,u_s,\zeta_{s,+})\) in
\eqref{eq:stationary-adjoint-system}. These are precisely the quantities
defined in \eqref{eq:gT-gs-def}.

Set \(\widetilde g:=g_T-g_s\) and
\(\widetilde\xi_-:=\xi_{T,-}-\xi_{s,-}\), and write
\[
\begin{aligned}
 \widetilde\Phi_f&:=\Phi_f(x,f_T)-\Phi_f(x,f_s),\\
 \widetilde F_U&:=F_U(x,u_T,\zeta_{T,+})-F_U(x,u_s,\zeta_{s,+}),\\
 \widetilde F_{\zeta_+}&:=F_{\zeta_+}(x,u_T,\zeta_{T,+})-F_{\zeta_+}(x,u_s,\zeta_{s,+}).
\end{aligned}
\]
Subtracting the stationary adjoint system
\eqref{eq:stationary-adjoint-system} from the dynamic adjoint system
\eqref{eq:dynamic-dual-general-estimate} gives
\begin{equation}\label{eq:deviation-backward}
\begin{cases}
 \displaystyle
 \widetilde g_t
 +\sum_{j=1}^dA_j^\top\widetilde g_{x_j}
 =-Q^\top\widetilde g-\widetilde\Phi_f,\\[1mm]
 \Lambda_+\widetilde\xi_+
 +K^\top\Lambda_-\widetilde\xi_-
 =\widetilde F_{\zeta_+},\\[1mm]
 \widetilde g(T)=-g_s.
\end{cases}
\end{equation}
Finally, subtracting \eqref{eq:static-boundary-optimality} from
\eqref{eq:dynamic-boundary-optimality} gives
\begin{equation}\label{eq:stationarity-deviation}
 \widetilde F_U-\Lambda_-\widetilde\xi_-=0.
\end{equation}
In \eqref{eq:deviation-forward}--\eqref{eq:deviation-backward} and
throughout this section, static quantities are understood as
time-independent functions on \((0,T)\).

Now we introduce the time-weight
\begin{equation*}
 d_T(t):=\min\{t,T-t\},
 \qquad
 \ThetaT(t):=e^{2\mu d_T(t)}.
\end{equation*}
Then \(\ThetaT\in W^{1,\infty}(0,T)\),
\(\ThetaT(0)=\ThetaT(T)=1\), and
\begin{equation}\label{eq:Theta-derivative}
 \abs{\ThetaT'(t)}\le2\mu\ThetaT(t)
 \qquad\text{for a.e. }t\in(0,T).
\end{equation}

\begin{remark}
The proof of Proposition~\ref{prop:weighted-state-io-turnpike} uses
integration by parts with \(\ThetaT\).
The corner of \(\ThetaT\) at \(t=T/2\) causes no difficulty because
\(\ThetaT\in W^{1,\infty}(0,T)\); see
\cite[Corollary~8.10, p.~215]{Brezis2011}.
\end{remark}

We have the following technical estimate.
\begin{proposition}\label{prop:weighted-state-io-turnpike}
Under Assumptions~\ref{ass:stabilizing-symmetrizer},
\ref{ass:boundary-dissipativity}, and \ref{ass:cost}, there exist
\(C,\mu>0\), independent of \(T\), such that
\begin{equation}\label{eq:weighted-state-io-turnpike}
 \int_0^T e^{2\mu d_T(t)}
 \left(
 \norm{\widetilde f(t)}_{\dbX}^2
 +\norm{\widetilde u(t)}_{\dbU}^2
 +\norm{\widetilde\zeta_+(t)}_{\dbY}^2
 \right)\dd t
 \le C.
\end{equation}
\end{proposition}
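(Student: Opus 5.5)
The plan is to combine the weighted duality identity for the deviation pair \((\widetilde f,\widetilde g)\) with strong convexity, and to absorb the weight-derivative errors using the weighted forms of the energy inequalities \eqref{eq:forward-complete-energy} and \eqref{eq:backward-complete-energy} with \(\mu\) small.

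\textbf{Step 1 (weighted duality).} Apply the pairing computation \eqref{eq:dynamic-pairing} to \(\widetilde f\) from \eqref{eq:deviation-forward} and \(\widetilde g\) from \eqref{eq:deviation-backward}. Here \(q=\widetilde\Phi_f\), \(z=\widetilde F_{\zeta_+}\), and the control term is \(\widetilde u\). Multiply by \(\ThetaT\) and integrate by parts on \((0,T)\), which is justified since \(\ThetaT\in W^{1,\infty}\) and \(\ThetaT(0)=\ThetaT(T)=1\). This gives
\[
\int_0^T\ThetaT\bigl(\ip{\widetilde f}{\widetilde\Phi_f}_{\dbX}+\ip{\widetilde\zeta_+}{\widetilde F_{\zeta_+}}_{\dbY}+\ip{\widetilde u}{\Lambda_-\widetilde\xi_-}_{\dbU}\bigr)\dd t
=\ip{\widetilde f(0)}{\widetilde g(0)}_{\dbX}+\ip{\widetilde f(T)}{g_s}_{\dbX}+\int_0^T\ThetaT'\ip{\widetilde f}{\widetilde g}_{\dbX}\dd t .
\]
By \eqref{eq:stationarity-deviation}, \(\Lambda_-\widetilde\xi_-=\widetilde F_U\). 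Monotonicity of \(\Phi_f\) together with the strong convexity of \(F\) from Assumption~\ref{ass:cost} then bounds the left side from below by \(c\int_0^T\ThetaT(\norm{\widetilde u}_{\dbU}^2+\norm{\widetilde\zeta_+}_{\dbY}^2)\dd t\).

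\textbf{Step 2 (weighted energy bounds in terms of the weighted \(L^2\) quantities).} Multiply \eqref{eq:forward-complete-energy}, written for \(\widetilde f\), by \(\ThetaT\) and integrate. Using \(\abs{\ThetaT'}\le2\mu\ThetaT\) and choosing \(\mu\) small so that the term \(2\mu C\mathcal E\) is absorbed by \(c\norm{\widetilde f}^2\), we get
\[
\sup_t\ThetaT\norm{\widetilde f(t)}^2+\int_0^T\ThetaT\norm{\widetilde f}^2\dd t\le C\norm{f_0-f_s}^2+C\int_0^T\ThetaT\norm{\widetilde u}^2\dd t .
\]
The same manipulation applied to \eqref{eq:backward-complete-energy} for \(\widetilde g\) gives a weighted bound on \(\widetilde g\) by \(\norm{g_s}^2\) plus \(\int_0^T\ThetaT(\norm{\widetilde\Phi_f}^2+\norm{\widetilde F_{\zeta_+}}^2)\dd t\). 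The Lipschitz bounds on the gradients reduce the latter to \(C\int_0^T\ThetaT(\norm{\widetilde f}^2+\norm{\widetilde u}^2+\norm{\widetilde\zeta_+}^2)\dd t\). Here \(\norm{f_s}\) and \(\norm{g_s}\) are bounded by Theorem~\ref{thm:static-wp}. The unweighted versions, Lemmas~\ref{lem:forward-energy-estimate} and \ref{lem:backward-energy-estimate}, also bound \(\norm{\widetilde g(0)}\) and \(\norm{\widetilde f(T)}\) by constants plus \(\norm{\widetilde u}_{L^2}\). The right side of Step 1 is therefore controlled by
\[
C+C\norm{\widetilde u}_{L^2(0,T;\dbU)}+2\mu\Bigl(\int\ThetaT\norm{\widetilde f}^2\Bigr)^{1/2}\Bigl(\int\ThetaT\norm{\widetilde g}^2\Bigr)^{1/2}.
\]

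\textbf{Step 3 (closing; the main obstacle).} Write \(W:=\int_0^T\ThetaT(\norm{\widetilde f}^2+\norm{\widetilde u}^2+\norm{\widetilde\zeta_+}^2)\dd t\). Steps 1 and 2 combine to
\[
cW\le C+C\sqrt W+C\mu W ,
\]
since the weighted \(\widetilde f\)-term is itself bounded by \(C+C\int\ThetaT\norm{\widetilde u}^2\). Two points need care.

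\begin{itemize}
\item \emph{Sign of the \(\Phi\)-term.} Because \(\Phi\) is only convex, \(\ip{\widetilde f}{\widetilde\Phi_f}\ge0\) contributes nothing coercive in \(\widetilde f\). The state is therefore recovered only through Step 2, from \(\widetilde u\).
\item \emph{Smallness of \(\mu\).} The constants \(c\) and \(C\) do not depend on \(\mu\) once \(\mu\le\mu_0\). Taking \(\mu\) small then absorbs \(C\mu W\), and Young's inequality absorbs \(C\sqrt W\), giving \(W\le C\).
\end{itemize}

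The delicate part is making sure the uniform-in-\(T\) constants from Lemmas~\ref{lem:forward-energy-estimate} and \ref{lem:backward-energy-estimate} enter without any hidden \(\mu\)-dependence. Equally important is that the boundary terms \(\widetilde f(T)\) and \(\widetilde g(0)\) carry weight \(1\), not a large weight. Both are guaranteed by \(\ThetaT(0)=\ThetaT(T)=1\).
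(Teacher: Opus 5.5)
Your proposal is correct and follows essentially the same route as the paper. Like the paper, you combine weighted forward and adjoint energy estimates for \(\mu\) small with the weighted duality identity for \((\widetilde f,\widetilde g)\), together with \eqref{eq:stationarity-deviation} and strong convexity, and then absorb the \(\ThetaT'\)-term and the endpoint terms. The only difference is cosmetic: you bound the endpoint pairings through the unweighted Lemmas~\ref{lem:forward-energy-estimate} and \ref{lem:backward-energy-estimate}, which gives \(C+C\sqrt W\), whereas the paper uses its weighted estimates with an \(\varepsilon\)-Young inequality; since \(\ThetaT\ge1\), the two amount to the same thing.
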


\begin{proof}
Set
\begin{equation*}
 \mathcal I
 :=\int_0^T\ThetaT(t)
 \left(\norm{\widetilde u(t)}_{\dbU}^2
 +\norm{\widetilde\zeta_+(t)}_{\dbY}^2
 \right)\dd t.
\end{equation*}
We first derive the weighted forward estimate. Applying
\eqref{eq:forward-complete-energy} to \(\widetilde f\) in \eqref{eq:deviation-forward}, multiplying by
\(\ThetaT\), and integrating by parts give
\begin{equation*}
\begin{aligned}
 \mathcal E(\widetilde f(T))
 -\mathcal E(\widetilde f(0))
 -\int_0^T\ThetaT'\mathcal E(\widetilde f)\dd t
 +c\int_0^T\ThetaT\norm{\widetilde f}_{\dbX}^2\dd t
 +c\int_0^T\ThetaT\norm{\widetilde\zeta_+}_{\dbY}^2\dd t
 \le C\int_0^T\ThetaT\norm{\widetilde u}_{\dbU}^2\dd t.
\end{aligned}
\end{equation*}
By \eqref{eq:Theta-derivative} and the equivalence of \(\mathcal E\)
with the squared \(\dbX\)-norm, there exists \(\mu_0>0\), independent
of \(T\), such that, for every \(\mu\in(0,\mu_0)\), the weight term is
absorbed in the sense that
\begin{equation*}
 -\int_0^T\ThetaT'(t)\mathcal E(\widetilde f(t))\dd t
 +c\int_0^T\ThetaT\norm{\widetilde f}_{\dbX}^2\dd t
 \ge\frac c2\int_0^T\ThetaT\norm{\widetilde f}_{\dbX}^2\dd t.
\end{equation*}
Consequently, after changing \(C\),
\begin{equation}\label{eq:weighted-forward-deviation}
\begin{aligned}
 \mathcal E(\widetilde f(T))
 &+\int_0^T\ThetaT\norm{\widetilde f}_{\dbX}^2\dd t
 +\int_0^T\ThetaT\norm{\widetilde\zeta_+}_{\dbY}^2\dd t
 \le C\left(\mathcal E(\widetilde f(0))+\mathcal I\right).
\end{aligned}
\end{equation}

Similarly, after decreasing \(\mu_0\) if necessary, applying the same
weighted argument to \eqref{eq:backward-complete-energy}, with
\(q=\widetilde\Phi_f\), \(z=\widetilde F_{\zeta_+}\), and
\(\widetilde g(T)=-g_s\), gives, for every \(\mu\in(0,\mu_0)\),
\begin{equation*}
\begin{aligned}
 \mathcal E^*(\widetilde g(0))
 &+\int_0^T\ThetaT\norm{\widetilde g}_{\dbX}^2\dd t
 \le C\biggl(
 \mathcal E^*(g_s)
 +\int_0^T\ThetaT\left(
 \norm{\widetilde\Phi_f}_{\dbX}^2
 +\norm{\widetilde F_{\zeta_+}}_{\dbY}^2\right)\dd t
 \biggr).
\end{aligned}
\end{equation*}

Set \(D_0:=\mathcal E(\widetilde f(0))+\mathcal E^*(g_s)\), which is
independent of \(T\). The upper Hessian bounds in
Assumption~\ref{ass:cost} and \eqref{eq:weighted-forward-deviation} give
\begin{equation*}
\begin{aligned}
 \int_0^T\ThetaT\norm{\widetilde\Phi_f}_{\dbX}^2\dd t
 &\le C\int_0^T\ThetaT\norm{\widetilde f}_{\dbX}^2\dd t
 \le C\left(D_0+\mathcal I\right),\\
 \int_0^T\ThetaT\norm{\widetilde F_{\zeta_+}}_{\dbY}^2\dd t
 &\le C\mathcal I.
\end{aligned}
\end{equation*}
Substituting these bounds into the weighted adjoint estimate yields
\begin{equation}\label{eq:weighted-backward-deviation}
\begin{aligned}
 \mathcal E^*(\widetilde g(0))
 +\int_0^T\ThetaT\norm{\widetilde g}_{\dbX}^2\dd t
 &\le C\left(D_0+\mathcal I\right).
\end{aligned}
\end{equation}

To close the preceding estimates, we use the calculation in the proof
of Lemma~\ref{lem:dynamic-green-identity}, together with
\eqref{eq:stationarity-deviation} and Assumption~\ref{ass:cost}, to obtain
\begin{equation*}
\begin{aligned}
 -\frac{\dd}{\dd t}
 \mathcal P[\widetilde f(t),\widetilde g(t)]
 &=\ip{\widetilde f}{\widetilde\Phi_f}_{\dbX}
 +\ip{\widetilde\zeta_+}{\widetilde F_{\zeta_+}}_{\dbY}
 +\ip{\widetilde u}{\Lambda_-\widetilde\xi_-}_{\dbU}\\
 &=\ip{\widetilde f}{\widetilde\Phi_f}_{\dbX}
 +\ip{\widetilde u}{\widetilde F_U}_{\dbU}
 +\ip{\widetilde\zeta_+}{\widetilde F_{\zeta_+}}_{\dbY}\\
 &\ge C^{-1}\left(
 \norm{\widetilde u}_{\dbU}^2
 +\norm{\widetilde\zeta_+}_{\dbY}^2\right).
\end{aligned}
\end{equation*}
Multiplying this inequality by \(\ThetaT\) and integrating by parts
give
\begin{equation*}
 \mathcal I
 \le C\biggl(
 \abs{\mathcal P[\widetilde f(0),\widetilde g(0)]}
 +\abs{\mathcal P[\widetilde f(T),\widetilde g(T)]}
 +\abs{\int_0^T\ThetaT'(t)
 \mathcal P[\widetilde f(t),\widetilde g(t)]\dd t}\biggr).
\end{equation*}
For every \(\varepsilon>0\), we use
\eqref{eq:weighted-forward-deviation} to control
\(\mathcal E(\widetilde f(T))\) and
\eqref{eq:weighted-backward-deviation} to control
\(\mathcal E^*(\widetilde g(0))\). Together with the endpoint data
\(\widetilde f(0)=f_0-f_s\) and \(\widetilde g(T)=-g_s\), the energy
equivalence, and Young's inequality, this gives
\begin{equation*}
 \abs{\mathcal P[\widetilde f(0),\widetilde g(0)]}
 +\abs{\mathcal P[\widetilde f(T),\widetilde g(T)]}
 \le C_\varepsilon D_0
 +\varepsilon C\mathcal I.
\end{equation*}
Using \eqref{eq:weighted-forward-deviation} and
\eqref{eq:weighted-backward-deviation} once more, together with
\eqref{eq:Theta-derivative}, we obtain
\begin{equation*}
\begin{aligned}
 \abs{\int_0^T\ThetaT'(t)
 \mathcal P[\widetilde f(t),\widetilde g(t)]\dd t}
 \le C\mu\int_0^T\ThetaT
 \norm{\widetilde f}_{\dbX}\norm{\widetilde g}_{\dbX}\dd t\le C\mu\left(D_0+\mathcal I\right).
\end{aligned}
\end{equation*}
Combining these inequalities gives
\begin{equation*}
 \mathcal I
 \le C_\varepsilon D_0
 +C(\varepsilon+\mu)\left(D_0+\mathcal I\right).
\end{equation*}
Choose \(\varepsilon>0\) so that \(C\varepsilon\le1/4\). After
decreasing \(\mu_0\) once more if necessary, we may assume that
\(C\mu_0\le1/4\). Hence, for every \(\mu\in(0,\mu_0)\),
\begin{equation*}
 \mathcal I\le CD_0.
\end{equation*}
Combining this estimate with \eqref{eq:weighted-forward-deviation}
proves \eqref{eq:weighted-state-io-turnpike}, since \(D_0\) is
independent of \(T\).
\end{proof}

\begin{theorem}[Exponential turnpike]\label{thm:main-exponential}
Under Assumptions~\ref{ass:stabilizing-symmetrizer},
\ref{ass:boundary-dissipativity}, and \ref{ass:cost}, there exist
constants \(C,\mu>0\), independent of \(T\), such that, with
\(I_t:=\bigl(d_T(t),T-d_T(t)\bigr)\),
\begin{equation}\label{eq:central-interval-turnpike}
 \norm{f_T(t)-f_s}_{\dbX}
 +\norm{u_T-u_s}_{L^2(I_t;\dbU)}
 +\norm{\zeta_{T,+}-\zeta_{s,+}}_{L^2(I_t;\dbY)}
 \le C\left(e^{-\mu t}+e^{-\mu(T-t)}\right),
 \qquad 0\le t\le T.
\end{equation}
In particular, for every \(t\in[0,T/2]\),
\begin{equation*}
  \norm{u_T-u_s}_{L^2((t,T-t);\dbU)}
  +\norm{\zeta_{T,+}-\zeta_{s,+}}_{L^2((t,T-t);\dbY)}
  \le Ce^{-\mu t}.
\end{equation*}
\end{theorem}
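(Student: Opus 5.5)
The plan is to derive the theorem from the weighted integral bound of Proposition~\ref{prop:weighted-state-io-turnpike}, taking \(\mu\in(0,\mu_0)\) and renaming constants. The control and observation parts follow directly. On \(I_t=(d_T(t),T-d_T(t))\) every \(s\) satisfies \(d_T(s)\ge d_T(t)\), so \(\ThetaT(s)\ge e^{2\mu d_T(t)}\). Therefore
\[
 e^{2\mu d_T(t)}\left(\norm{\widetilde u}_{L^2(I_t;\dbU)}^2+\norm{\widetilde\zeta_+}_{L^2(I_t;\dbY)}^2\right)\le\int_{I_t}\ThetaT\left(\norm{\widetilde u}_{\dbU}^2+\norm{\widetilde\zeta_+}_{\dbY}^2\right)\dd s\le C,
\]
and \(e^{-\mu d_T(t)}\le e^{-\mu t}+e^{-\mu(T-t)}\) gives the bound on these two terms. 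The case \(I_t=\emptyset\) is trivial. The second display of the theorem is just the case \(t\le T/2\).

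The pointwise state estimate \(\norm{\widetilde f(t)}_{\dbX}\) is the main step, since the proposition only controls \(\widetilde f\) in a weighted \(L^2\) sense. I would use the differential inequality \eqref{eq:forward-complete-energy} applied to \eqref{eq:deviation-forward}. Dropping the nonnegative terms gives \(\frac{\dd}{\dd s}\mathcal E(\widetilde f(s))\le C\norm{\widetilde u(s)}_{\dbU}^2\), and I would integrate it on a short window ending at \(t\).

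\emph{Case \(t\ge1\).} I integrate over \(s\in(\tau,t)\) and then average in \(\tau\in(t-1,t)\):
\[
 \mathcal E(\widetilde f(t))\le\int_{t-1}^t\mathcal E(\widetilde f(\tau))\dd\tau+C\int_{t-1}^t\norm{\widetilde u}_{\dbU}^2\dd s.
\]
On \((t-1,t)\) we have \(d_T(s)\ge d_T(t)-1\), hence \(\ThetaT(s)\ge e^{-2\mu}e^{2\mu d_T(t)}\). The right-hand side is therefore at most \(Ce^{2\mu}e^{-2\mu d_T(t)}\) times the left side of \eqref{eq:weighted-state-io-turnpike}, so \(\norm{\widetilde f(t)}_{\dbX}^2\le Ce^{-2\mu d_T(t)}\).

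\emph{Case \(t<1\).} Here I integrate directly from \(0\) and use \(\ThetaT\ge1\):
\[
 \mathcal E(\widetilde f(t))\le\mathcal E(f_0-f_s)+C\int_0^T\ThetaT\norm{\widetilde u}_{\dbU}^2\dd s\le C.
\]
This is \(T\)-independent, since \(f_0\) and \(f_s\) are fixed. Because \(e^{-\mu d_T(t)}\ge e^{-\mu}\) when \(d_T(t)\le t<1\), this constant bound is also of the required form.

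Taking square roots and using the equivalence of \(\mathcal E\) with \(\norm{\cdot}_{\dbX}^2\) finishes the state estimate. Adding it to the control and observation bounds yields \eqref{eq:central-interval-turnpike}. The only delicate points are that the averaging trick needs \(\widetilde f\in C([0,T];\dbX)\), which Theorem~\ref{thm:wellposedness} provides, and that the energy inequality must hold for mild solutions, which follows by the density argument mentioned after Lemma~\ref{lem:forward-energy-estimate}.
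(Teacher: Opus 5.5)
Your proposal is correct, and for the control and observation terms it coincides with the paper's argument, using $d_T(s)\ge d_T(t)$ on $I_t$. For the pointwise state bound you take a different route from the paper.

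\textbf{The paper's route.} It keeps the dissipative term, writing $\frac{\dd}{\dd t}\mathcal E(\widetilde f)+\alpha\mathcal E(\widetilde f)\le C\norm{\widetilde u}_{\dbU}^2$. It then uses the Duhamel bound $\mathcal E(\widetilde f(t))\le e^{-\alpha t}\mathcal E(\widetilde f(0))+C\int_0^t e^{-\alpha(t-s)}\norm{\widetilde u(s)}_{\dbU}^2\dd s$. With $2\mu<\alpha$, the $1$-Lipschitz inequality $t-s\ge d_T(t)-d_T(s)$ gives $e^{-\alpha(t-s)}\le e^{-2\mu d_T(t)}\ThetaT(s)$. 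So only the weighted bound on $\widetilde u$ from Proposition~\ref{prop:weighted-state-io-turnpike} is needed, and all $t\in[0,T]$ are handled at once.

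\textbf{Your route.} You discard the dissipation entirely. You recover pointwise information from the weighted $L^2$ bound on $\widetilde f$ itself, by averaging the integrated energy inequality over a unit window. A separate crude bound handles $t<1$.

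\textbf{Comparison.} The paper's version exploits the exponential stability of the forward dynamics directly and avoids the case split. Yours uses that stability only through the proposition and costs just a harmless factor $e^{2\mu}$. It also does not invoke the condition $2\mu<\alpha$. Finally, it would go through unchanged with any forward estimate that controls the growth of $\mathcal E$ over bounded time windows, since all the decay comes from the weighted estimate.
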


\begin{proof}
Write \(\widetilde f:=f_T-f_s\), \(\widetilde u:=u_T-u_s\), and
\(\widetilde\zeta_+:=\zeta_{T,+}-\zeta_{s,+}\). Applying
\eqref{eq:forward-complete-energy} to \(\widetilde f\) in
\eqref{eq:deviation-forward},
we obtain from the energy equivalence an \(\alpha>0\) such that
\begin{equation*}
 \frac{\dd}{\dd t}\mathcal E(\widetilde f(t))
 +\alpha\mathcal E(\widetilde f(t))
 \le C\norm{\widetilde u(t)}_{\dbU}^2.
\end{equation*}
Therefore
\begin{equation}\label{eq:state-convolution}
 \mathcal E(\widetilde f(t))
 \le e^{-\alpha t}\mathcal E(\widetilde f(0))
 +C\int_0^t e^{-\alpha(t-s)}
 \norm{\widetilde u(s)}_{\dbU}^2\dd s.
\end{equation}
After decreasing \(\mu\) if necessary, assume \(2\mu<\alpha\). Since
\(d_T\) is \(1\)-Lipschitz,
\begin{equation*}
 t-s\ge d_T(t)-d_T(s).
\end{equation*}
Consequently,
\begin{equation*}
 e^{-\alpha(t-s)}\norm{\widetilde u(s)}_{\dbU}^2
 \le e^{-2\mu(t-s)}\norm{\widetilde u(s)}_{\dbU}^2
 \le e^{-2\mu d_T(t)}\ThetaT(s)\norm{\widetilde u(s)}_{\dbU}^2.
\end{equation*}
Also \(d_T(t)\le t\) and \(2\mu<\alpha\), so
\(e^{-\alpha t}\le e^{-2\mu d_T(t)}\).
Using \eqref{eq:weighted-state-io-turnpike} in
\eqref{eq:state-convolution} gives
\begin{equation*}
 \mathcal E(\widetilde f(t))\le Ce^{-2\mu d_T(t)}.
\end{equation*}
Hence, by the energy equivalence,
\begin{equation*}
 \norm{\widetilde f(t)}_{\dbX}
 \le Ce^{-\mu d_T(t)}
 \le C\left(e^{-\mu t}+e^{-\mu(T-t)}\right).
\end{equation*}

For fixed \(t\in[0,T]\), every \(s\in I_t\) satisfies
\(d_T(s)\ge d_T(t)\), and hence
\(\ThetaT(s)\ge e^{2\mu d_T(t)}\). Therefore,
\eqref{eq:weighted-state-io-turnpike} gives
\begin{equation*}
\begin{aligned}
 e^{2\mu d_T(t)}\int_{I_t}
 \left(\norm{\widetilde u(s)}_{\dbU}^2
 +\norm{\widetilde\zeta_+(s)}_{\dbY}^2\right)\dd s
 &\le\int_{I_t}\ThetaT(s)
 \left(\norm{\widetilde u(s)}_{\dbU}^2
 +\norm{\widetilde\zeta_+(s)}_{\dbY}^2\right)\dd s\\
 &\le\int_0^T\ThetaT(s)
 \left(\norm{\widetilde u(s)}_{\dbU}^2
 +\norm{\widetilde\zeta_+(s)}_{\dbY}^2\right)\dd s\le C.
\end{aligned}
\end{equation*}
Dividing by \(e^{2\mu d_T(t)}\) and taking square roots gives
\begin{equation*}
 \norm{\widetilde u}_{L^2(I_t;\dbU)}
 +\norm{\widetilde\zeta_+}_{L^2(I_t;\dbY)}
 \le Ce^{-\mu d_T(t)}.
\end{equation*}
Together with the preceding pointwise estimate, this proves
\eqref{eq:central-interval-turnpike}. Finally, if
\(t\in[0,T/2]\), then \(d_T(t)=t\), \(I_t=(t,T-t)\), and
\(e^{-\mu(T-t)}\le e^{-\mu t}\). Hence
\begin{equation*}
 \norm{\widetilde u}_{L^2((t,T-t);\dbU)}
 +\norm{\widetilde\zeta_+}_{L^2((t,T-t);\dbY)}
 \le Ce^{-\mu t},
\end{equation*}
which proves the second estimate.
\end{proof}

The weighted estimate underlying
Theorem~\ref{thm:main-exponential} also yields the corresponding
integral and measure turnpike properties.
\begin{corollary}[Integral and measure turnpikes]
\label{cor:integral-measure}
Under the assumptions of Theorem~\ref{thm:main-exponential},
\begin{equation}\label{eq:integral-corollary}
 \int_0^T\left(
 \norm{\widetilde f(t)}_{\dbX}^2
 +\norm{\widetilde u(t)}_{\dbU}^2
 +\norm{\widetilde\zeta_+(t)}_{\dbY}^2
 \right)\dd t\le C,
\end{equation}
and the corresponding time average is bounded by \(C/T\).
Moreover, given \(\varepsilon_0>0\), define
\begin{equation*}
 \mathscr M_{\varepsilon_0}^T
 :=\left\{t\in(0,T):
 \norm{\widetilde f(t)}_{\dbX}^2
 +\norm{\widetilde u(t)}_{\dbU}^2
 +\norm{\widetilde\zeta_+(t)}_{\dbY}^2>\varepsilon_0^2\right\}.
\end{equation*}
Then
\begin{equation}\label{eq:measure-corollary}
 \abs{\mathscr M_{\varepsilon_0}^T}\le\frac C{\varepsilon_0^2},
 \qquad
 \frac{\abs{\mathscr M_{\varepsilon_0}^T}}T
 \le\frac C{\varepsilon_0^2T}.
\end{equation}
\end{corollary}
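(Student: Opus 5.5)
The plan is to read both assertions off the weighted estimate \eqref{eq:weighted-state-io-turnpike} of Proposition~\ref{prop:weighted-state-io-turnpike}, using only that the weight is bounded below by one. Since \(d_T(t)\ge0\) on \([0,T]\), we have \(\ThetaT(t)=e^{2\mu d_T(t)}\ge1\) for every \(t\in[0,T]\). Hence the integrand of \eqref{eq:integral-corollary} is pointwise dominated by the weighted integrand in \eqref{eq:weighted-state-io-turnpike}. That bound yields \eqref{eq:integral-corollary} directly, with the same constant \(C\), which is independent of \(T\). Dividing by \(T\) then bounds the time average
\[
 \frac1T\int_0^T\left(\norm{\widetilde f(t)}_{\dbX}^2+\norm{\widetilde u(t)}_{\dbU}^2+\norm{\widetilde\zeta_+(t)}_{\dbY}^2\right)\dd t
\]
by \(C/T\).

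For the measure turnpike we apply Chebyshev's inequality to the nonnegative function \(\psi(t):=\norm{\widetilde f(t)}_{\dbX}^2+\norm{\widetilde u(t)}_{\dbU}^2+\norm{\widetilde\zeta_+(t)}_{\dbY}^2\). Since \(\widetilde f\in C([0,T];\dbX)\), \(\widetilde u\in L^2(0,T;\dbU)\) and \(\widetilde\zeta_+\in L^2(0,T;\dbY)\), the function \(\psi\) is measurable and integrable. Therefore \(\mathscr M_{\varepsilon_0}^T=\{\psi>\varepsilon_0^2\}\) is defined up to a null set, and its Lebesgue measure makes sense. On this set \(\psi>\varepsilon_0^2\), so
\[
 \varepsilon_0^2\abs{\mathscr M_{\varepsilon_0}^T}\le\int_{\mathscr M_{\varepsilon_0}^T}\psi\dd t\le\int_0^T\psi\dd t\le C.
\]
This gives the first inequality in \eqref{eq:measure-corollary}. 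Dividing by \(T\) gives the second.

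There is no real obstacle here: all the analytic work, namely the absorption of the weight derivative and the closure via the pairing \(\mathcal P\), is already contained in Proposition~\ref{prop:weighted-state-io-turnpike}. The only point to check is that its constant \(C\) does not depend on \(T\). This holds because it is a multiple of \(D_0=\mathcal E(f_0-f_s)+\mathcal E^*(g_s)\), and \(D_0\) involves only the initial datum and static quantities.
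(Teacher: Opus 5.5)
Your proposal is correct and follows the paper's proof: the weight \(\ThetaT\ge1\) turns \eqref{eq:weighted-state-io-turnpike} into \eqref{eq:integral-corollary}, and Chebyshev's inequality applied to the same integrand gives \eqref{eq:measure-corollary}. Your added remark on measurability and on the \(T\)-independence of the constant through \(D_0\) is a correct detail that the paper leaves implicit.
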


\begin{proof}
The weight $\ThetaT$ in \eqref{eq:weighted-state-io-turnpike} is at least one,
which proves \eqref{eq:integral-corollary}. On
\(\mathscr M_{\varepsilon_0}^T\), its integrand is at least
\(\varepsilon_0^2\), and Chebyshev's inequality gives
\eqref{eq:measure-corollary}.
\end{proof}

So far, we have assumed that both the control and observation are
available on the entire boundary \(\Gamma\). In applications, however,
they may be available only on a portion of the boundary, as in the
Saint--Venant system considered in Section~\ref{sec:saint-venant}. We
therefore state the following localized version of
Theorem~\ref{thm:main-exponential}.
\begin{corollary}[Localized boundary actuation and observation]
\label{cor:partial-boundary}
Let \(\Gamma_c\subset\Gamma\) be measurable, and suppose that the
boundary input is supported on \(\Gamma_c\). Thus, in both the dynamic
and static problems, the boundary condition is
\begin{equation*}
 \zeta_-=K\zeta_+
 \quad\text{on }\Gamma\setminus\Gamma_c,
 \qquad
 \zeta_-=K\zeta_++u
 \quad\text{on }\Gamma_c.
\end{equation*}
Assume that, for some \(\delta>0\),
\begin{equation*}
 M-K^\top NK\geq\delta I_{n_+}
 \quad\text{on }\Gamma_c,
 \qquad
 M-K^\top NK\geq0
 \quad\text{on }\Gamma\setminus\Gamma_c.
\end{equation*}
Consider the cost functionals
\begin{equation*}
\begin{aligned}
 J_T(u)
 &=
 \int_0^T\left[
   \int_\Omega \Phi\bigl(x,f(t,x)\bigr)\dd x
   +\int_{\Gamma_c}
      F\bigl(x,u(t,x),\zeta_+(t,x)\bigr)\dd\sigma
 \right]\dd t,\\
 J_s(u)
 &=\int_\Omega \Phi\bigl(x,f(x)\bigr)\dd x
 +\int_{\Gamma_c}F\bigl(x,u(x),\zeta_+(x)\bigr)\dd\sigma.
\end{aligned}
\end{equation*}
Suppose that \(\Phi\) and the restriction of \(F\) to \(\Gamma_c\)
satisfy Assumption~\ref{ass:cost}, and that
Assumption~\ref{ass:stabilizing-symmetrizer} holds.
Set \(\widetilde f:=f_T-f_s\), \(\widetilde u:=u_T-u_s\), and
\(\widetilde\zeta_+:=\zeta_{T,+}-\zeta_{s,+}\).
Then there exist \(C,\mu>0\), independent of \(T\), such that, for
every \(t\in[0,T]\),
\begin{equation*}
\begin{aligned}
 \norm{\widetilde f(t)}_{\dbX}
 &+\left(\int_{d_T(t)}^{T-d_T(t)}
 \left[
 \norm{\widetilde u(s)}_{L^2(\Gamma_c;\R^{n_-})}^2
 +\norm{\widetilde\zeta_+(s)}_{L^2(\Gamma_c;\R^{n_+})}^2
 \right]\dd s\right)^{1/2}
 \le C\left(e^{-\mu t}+e^{-\mu(T-t)}\right).
\end{aligned}
\end{equation*}
\end{corollary}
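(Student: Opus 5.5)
The plan is to rerun the proofs of Proposition~\ref{prop:weighted-state-io-turnpike} and Theorem~\ref{thm:main-exponential} with every boundary integral split into its parts on \(\Gamma_c\) and on \(\Gamma\setminus\Gamma_c\). Only the energy estimates and the Green identity change. The closing argument through the strongly convex pairing should then go through verbatim, once \(\dbU\) and \(\dbY\) are replaced by \(L^2(\Gamma_c;\R^{n_-})\) and \(L^2(\Gamma_c;\R^{n_+})\).

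\emph{Forward estimate.} On \(\Gamma_c\) the inequality \eqref{eq:forward-boundary-algebra} holds as before. On \(\Gamma\setminus\Gamma_c\) we have \(u=0\) and \(M-K^\top NK\ge0\), so
\[
 -f^\top EA_\nu f=-\zeta_+^\top(M-K^\top NK)\zeta_+\le0 .
\]
Hence \eqref{eq:forward-complete-energy} holds with \(\norm{\zeta_+}^2_{L^2(\Gamma_c)}\) on the left and \(\norm{u}^2_{L^2(\Gamma_c)}\) on the right. The \(c\norm{f}_{\dbX}^2\) term comes from the internal dissipation alone, so it is unaffected.

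\emph{Adjoint estimate.} The adjoint boundary condition is \(\Lambda_+\xi_++K^\top\Lambda_-\xi_-=z\) on \(\Gamma_c\) and \(=0\) on \(\Gamma\setminus\Gamma_c\). Here I need a nonstrict analogue of Lemma~\ref{lem:dual-boundary-dissipativity}. From \(M-K^\top NK\ge0\) we get \(\|R\|\le1\), hence \(N^{-1}-KM^{-1}K^\top\ge0\). With \(z=0\) the computation in that lemma then gives \(g^\top E^{-1}A_\nu^\top g\le0\). On \(\Gamma_c\) the strict version holds and yields \(C|z|^2-\delta|\xi_-|^2\). So \eqref{eq:backward-complete-energy} holds with \(\xi_-\) and \(z\) measured on \(\Gamma_c\), which is all the subsequent argument uses.

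\emph{Closing the argument.} The pairing identity \eqref{eq:dynamic-pairing} picks up boundary terms only on \(\Gamma_c\), because \(u=0\) and \(z=0\) off \(\Gamma_c\). The optimality conditions \eqref{eq:dynamic-boundary-optimality}--\eqref{eq:stationarity-deviation} hold in \(L^2(\Gamma_c)\) by the same derivation, using the same representation as in Proposition~\ref{prop:dynamic-adjoint-representation}. Strong convexity of \(F\) restricted to \(\Gamma_c\) then gives
\[
 -\tfrac{\dd}{\dd t}\mathcal P[\widetilde f,\widetilde g]\ge C^{-1}\bigl(\norm{\widetilde u}^2_{L^2(\Gamma_c)}+\norm{\widetilde\zeta_+}^2_{L^2(\Gamma_c)}\bigr).
\]
The weighted argument of Proposition~\ref{prop:weighted-state-io-turnpike} and the convolution argument of Theorem~\ref{thm:main-exponential} then carry over unchanged and give the stated bound.

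\emph{Main obstacle.} The weak part is well-posedness under only nonstrict dissipativity off \(\Gamma_c\). This concerns the evolution problem (Theorem~\ref{thm:wellposedness}) and, more delicately, the stationary problem, which the paper proves in the appendix under strict boundary dissipation. For the stationary problem I would first try to show that the internal dissipation \eqref{eq:E-internal-dissipation} already gives uniqueness and an a priori estimate. The stationary energy identity gives \(c\norm{f}^2\le C\norm{u}^2_{L^2(\Gamma_c)}\), since the boundary term off \(\Gamma_c\) has a good sign. Existence would then follow from the same duality or semigroup argument, or as the limit \(t\to\infty\) of the exponentially stable semigroup, with the boundary trace on \(\Gamma_c\) controlled by the \(\delta\)-term there.
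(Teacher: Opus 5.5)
Your proposal is correct and follows the paper's route. The paper omits the proof, saying only that one reruns Proposition~\ref{prop:weighted-state-io-turnpike} and Theorem~\ref{thm:main-exponential} with boundary integrals restricted to $\Gamma_c$ and the sign-definite contributions on $\Gamma\setminus\Gamma_c$ discarded; your nonstrict version of Lemma~\ref{lem:dual-boundary-dissipativity} (via $\norm{R}\le 1$) and your flagging of well-posedness under only nonstrict dissipation off $\Gamma_c$ spell out two points the paper leaves implicit.
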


The proof is omitted, since one can repeat the proofs of
Proposition~\ref{prop:weighted-state-io-turnpike} and
Theorem~\ref{thm:main-exponential}, restricting the boundary integrals
to \(\Gamma_c\) and discarding the nonnegative boundary contributions
on \(\Gamma\setminus\Gamma_c\).

\section{An example for the 2-D Saint--Venant equations}
\label{sec:saint-venant}

We now illustrate the preceding exponential turnpike result for the linearized
Saint--Venant equations and then examine their turnpike structure
numerically.

\subsection{Physical setting and Saint--Venant model}

In a physical river reach, continuous embankments are constructed along
the two banks.  They prevent water from escaping laterally, so the
velocity normal to either bank is zero.
Sluice gates are installed at the upstream and downstream ends.  By
regulating the water discharge, each gate prescribes a relation
between the water depth and the normal velocity.  This physical
configuration is shown in Figure~\ref{fig:sv-configuration}(a).

For simplicity, we model the river channel as the rectangle
\[
 \Omega=(0,L)\times(0,1),
\]
where \(L>0\) is the distance between the two gates.  As indicated in
Figure~\ref{fig:sv-configuration}(b), \(x_1=0\) and \(x_1=L\) represent
the upstream and downstream gates, whereas \(x_2=0\) and \(x_2=1\)
represent the solid embankments.

\begin{figure}[ht]
\centering
\begin{tikzpicture}[x=0.925cm,y=1cm,font=\small,>=stealth]
  \begin{scope}[xshift=-0.55cm]

  \draw[brown!55!green!32,line width=10pt,line cap=round,line join=round]
    (0.55,4.08)
    .. controls (0.82,3.96) and (1.05,3.45) .. (1.43,3.35)
    .. controls (1.80,3.47) and (2.05,2.52) .. (2.63,2.45)
    .. controls (2.98,2.40) and (3.18,2.82) .. (3.48,3.05)
    .. controls (3.78,3.28) and (4.02,4.38) .. (4.60,4.30)
    .. controls (5.12,4.24) and (5.10,3.64) .. (5.48,3.52)
    .. controls (5.75,3.43) and (5.88,3.10) .. (6.16,3.18)
    .. controls (6.38,3.27) and (6.58,3.50) .. (6.80,3.72);
  \draw[brown!55!green!32,line width=10pt,line cap=round,line join=round]
    (0.55,2.02)
    .. controls (0.86,1.92) and (1.02,2.10) .. (1.42,1.92)
    .. controls (1.80,1.78) and (1.92,0.76) .. (2.55,0.72)
    .. controls (3.05,0.69) and (3.15,1.22) .. (3.78,1.50)
    .. controls (4.12,1.67) and (4.22,2.55) .. (4.78,2.36)
    .. controls (5.16,2.22) and (5.30,1.46) .. (5.90,1.37)
    .. controls (6.25,1.31) and (6.43,1.50) .. (6.80,1.80);

  \path[fill=cyan!27]
    (0.55,4.08)
    .. controls (0.82,3.96) and (1.05,3.45) .. (1.43,3.35)
    .. controls (1.80,3.47) and (2.05,2.52) .. (2.63,2.45)
    .. controls (2.98,2.40) and (3.18,2.82) .. (3.48,3.05)
    .. controls (3.78,3.28) and (4.02,4.38) .. (4.60,4.30)
    .. controls (5.12,4.24) and (5.10,3.64) .. (5.48,3.52)
    .. controls (5.75,3.43) and (5.88,3.10) .. (6.16,3.18)
    .. controls (6.38,3.27) and (6.58,3.50) .. (6.80,3.72)
    -- (6.80,1.80)
    .. controls (6.43,1.50) and (6.25,1.31) .. (5.90,1.37)
    .. controls (5.30,1.46) and (5.16,2.22) .. (4.78,2.36)
    .. controls (4.22,2.55) and (4.12,1.67) .. (3.78,1.50)
    .. controls (3.15,1.22) and (3.05,0.69) .. (2.55,0.72)
    .. controls (1.92,0.76) and (1.80,1.78) .. (1.42,1.92)
    .. controls (1.02,2.10) and (0.86,1.92) .. (0.55,2.02)
    -- cycle;
  \draw[brown!75!black,line width=1.0pt]
    (0.55,4.08)
    .. controls (0.82,3.96) and (1.05,3.45) .. (1.43,3.35)
    .. controls (1.80,3.47) and (2.05,2.52) .. (2.63,2.45)
    .. controls (2.98,2.40) and (3.18,2.82) .. (3.48,3.05)
    .. controls (3.78,3.28) and (4.02,4.38) .. (4.60,4.30)
    .. controls (5.12,4.24) and (5.10,3.64) .. (5.48,3.52)
    .. controls (5.75,3.43) and (5.88,3.10) .. (6.16,3.18)
    .. controls (6.38,3.27) and (6.58,3.50) .. (6.80,3.72);
  \draw[brown!75!black,line width=1.0pt]
    (0.55,2.02)
    .. controls (0.86,1.92) and (1.02,2.10) .. (1.42,1.92)
    .. controls (1.80,1.78) and (1.92,0.76) .. (2.55,0.72)
    .. controls (3.05,0.69) and (3.15,1.22) .. (3.78,1.50)
    .. controls (4.12,1.67) and (4.22,2.55) .. (4.78,2.36)
    .. controls (5.16,2.22) and (5.30,1.46) .. (5.90,1.37)
    .. controls (6.25,1.31) and (6.43,1.50) .. (6.80,1.80);

  \draw[blue!55!cyan,line width=0.75pt,line cap=round]
    (0.92,2.88)..controls(1.10,2.99)and(1.32,2.80)..(1.52,2.90);
  \draw[blue!55!cyan,line width=0.75pt,line cap=round]
    (2.20,1.40)..controls(2.43,1.54)and(2.72,1.31)..(3.02,1.46);
  \draw[blue!55!cyan,line width=0.75pt,line cap=round]
    (3.96,3.16)..controls(4.18,3.30)and(4.43,3.09)..(4.70,3.22);
  \draw[blue!55!cyan,line width=0.75pt,line cap=round]
    (5.28,2.44)..controls(5.48,2.56)and(5.74,2.38)..(6.02,2.49);

  \fill[gray!38] (0.37,1.84) rectangle (0.73,2.20);
  \fill[gray!38] (0.37,3.90) rectangle (0.73,4.26);
  \draw[gray!75!black,line width=2.1pt] (0.55,2.02)--(0.55,4.08);
  \filldraw[fill=gray!12,draw=gray!75!black,line width=0.7pt]
    (0.39,2.76) rectangle (0.71,3.34);
  \fill[gray!38] (6.62,1.62) rectangle (6.98,1.98);
  \fill[gray!38] (6.62,3.54) rectangle (6.98,3.90);
  \draw[gray!75!black,line width=2.1pt] (6.80,1.80)--(6.80,3.72);
  \filldraw[fill=gray!12,draw=gray!75!black,line width=0.7pt]
    (6.64,2.47) rectangle (6.96,3.05);

  \node[font=\tiny,fill=white,inner sep=1.2pt] at (3.82,4.57)
    {Embankment};
  \node[font=\tiny,fill=white,inner sep=1.2pt] at (5.62,0.83)
    {Embankment};
  \node[font=\tiny,align=center,anchor=east,inner sep=0pt]
    at (0.30,3.05)
    {Upstream\\gate};
  \node[font=\tiny,align=center,anchor=west,inner sep=0pt]
    at (7.01,2.76)
    {Downstream\\gate};
  \node[font=\small\bfseries] at (3.65,-0.25)
    {(a) Physical waterway};
  \end{scope}

  \fill[cyan!10] (8.75,1.30) rectangle (15.55,4.05);
  \draw[brown!70!black,line width=1.2pt]
    (8.75,1.30)--(15.55,1.30);
  \draw[brown!70!black,line width=1.2pt]
    (8.75,4.05)--(15.55,4.05);
  \draw[blue!45!black,line width=1.7pt]
    (8.75,1.30)--(8.75,4.05);
  \draw[blue!45!black,line width=1.7pt]
    (15.55,1.30)--(15.55,4.05);

  \draw[->,blue!60!black,line width=1pt]
    (10.80,2.68)--(13.50,2.68)
    node[midway,above,font=\scriptsize] {steady flow $W^*>0$};

  \draw[->,line width=0.6pt] (8.75,0.68)--(15.62,0.68)
    node[right] {$x_1$};
  \draw[->,line width=0.6pt] (8.45,1.30)--(8.45,4.12)
    node[above] {$x_2$};
  \node[below,font=\scriptsize] at (8.75,0.68) {$0$};
  \node[below,font=\scriptsize] at (15.55,0.68) {$L$};
  \node[left,font=\scriptsize] at (8.45,1.30) {$0$};
  \node[left,font=\scriptsize] at (8.45,4.05) {$1$};

  \node[font=\small\bfseries] at (12.15,-0.25)
    {(b) Mathematical domain};
\end{tikzpicture}
\caption{Physical sluice-control configuration and its rectangular
mathematical idealization.}
\label{fig:sv-configuration}
\end{figure}

Let \(H=H(t,x_1,x_2)\) be the water depth,
\(W=W(t,x_1,x_2)\) the velocity along the channel, and
\(V=V(t,x_1,x_2)\) the velocity across the channel.  With bottom slopes
\(S_1,S_2\), linear drag coefficient \(k>0\), Coriolis coefficient
\(\ell>0\), and gravitational constant \(g>0\), the flow is governed by
the 2-D Saint--Venant equations \cite{YangYong2024}
\begin{equation}\label{eq:svquasi}
\begin{cases}
 H_t+WH_{x_1}+HW_{x_1}+VH_{x_2}+HV_{x_2}=0,\\[1mm]
 W_t+WW_{x_1}+VW_{x_2}+gH_{x_1}=gS_1-kW+\ell V,\\[1mm]
 V_t+WV_{x_1}+VV_{x_2}+gH_{x_2}=gS_2-\ell W-kV,
\end{cases}
\end{equation}
which is a first-order quasilinear hyperbolic system.

We first assume that the Saint--Venant equations admit a constant steady
state \((H^*,W^*,V^*)\), namely a solution of
\eqref{eq:svquasi} that is independent of both time and space. Our objective is to drive the flow toward
this state by adjusting the downstream gate while keeping the upstream
gate fixed.  We restrict attention to solutions near this target state
and therefore linearize the Saint--Venant equations around this state.  Moreover, we consider the subcritical steady state $(H^*,W^*,0)$ with
\[
  H^*>0,
 \qquad 0<W^*<\sqrt{gH^*}.
\]
Here \(W^*>0\) is the constant velocity along the channel toward the
downstream gate, whereas \(V^*=0\) means that there is no flow across
the channel.

\subsection{Linearization, boundary variables, and optimal control}

Near this steady state, let \(h\), \(w\), and \(v\) denote the deviations
of \(H\), \(W\), and \(V\) from their steady-state values, respectively.
Thus, we have
\[
 H=H^*+h,
 \qquad W=W^*+w,
 \qquad V=V^*+v=v.
\]

Then \(f:=(h,w,v)^\top\) satisfies the linearized Saint--Venant equations
\begin{equation}\label{eq:sv-linearized}
 f_t+A_1f_{x_1}+A_2f_{x_2}=Qf
 \qquad\text{in }(0,T)\times\Omega,
\end{equation}
with
\[
 A_1=
 \begin{pmatrix}
  W^*&H^*&0\\
  g&W^*&0\\
  0&0&W^*
 \end{pmatrix},
 \qquad
 A_2=
 \begin{pmatrix}
  0&0&H^*\\
  0&0&0\\
  g&0&0
 \end{pmatrix},
 \qquad
 Q=
 \begin{pmatrix}
  0&0&0\\
  0&-k&\ell\\
  0&-\ell&-k
 \end{pmatrix}.
\]

Set \(a:=\sqrt{g/H^*}\). On the upper wall
\(x_2=1\), according to \cite[Section~4]{YangYong2024},
\(a h-v\) and \(a h+v\) are
incoming and outgoing variables, respectively.   We prescribe the boundary condition
\begin{equation}\label{eq:sv-upper-wall-bc}
 \bigl[a h-v\bigr](t,x_1,1)=\bigl[a h+v\bigr](t,x_1,1),
 \qquad (t,x_1)\in(0,T)\times(0,L),
\end{equation}
which  corresponds to the gain \(K=1\).

At \(x_2=0\), the incoming and outgoing variables are \(a h+v\) and
\(v-a h\), respectively.  The corresponding boundary condition has
gain \(K=-1\):
\begin{equation}\label{eq:sv-lower-wall-bc}
 \bigl[a h+v\bigr](t,x_1,0)=-\bigl[v-a h\bigr](t,x_1,0),
 \qquad (t,x_1)\in(0,T)\times(0,L).
\end{equation}
Both \eqref{eq:sv-upper-wall-bc} and \eqref{eq:sv-lower-wall-bc} are
equivalent to \(v=0\).  Since \(V=V^*+v=v\) is the velocity component
perpendicular to these walls, these conditions precisely represent the impermeable
solid-wall boundary conditions.

At the downstream gate \(x_1=L\), the incoming variable is \(a h-w\),
whereas \(a h+w\) and \(v\) are outgoing.  A sluice gate regulates the
water discharge through it, which depends on the velocity component
perpendicular to the gate but not on the tangential velocity \(V\)
\cite[Sections~2 and~3]{MoralesHernandezEtAl2013}.
Accordingly, we take \(K=0\), use the scalar control \(u\) to prescribe
the incoming variable, and observe \(\zeta_+=(\zeta_{+,1},\zeta_{+,2})^\top\), where
\(\zeta_{+,1}(t,x_2):=a h(t,L,x_2)+w(t,L,x_2)\) and
\(\zeta_{+,2}(t,x_2):=v(t,L,x_2)\). The downstream boundary condition is
\begin{equation}\label{eq:sv-downstream-bc}
 \bigl[a h-w\bigr](t,L,x_2)=u(t,x_2),
 \qquad (t,x_2)\in(0,T)\times(0,1).
\end{equation}

At the upstream gate \(x_1=0\), the incoming variables are \(a h+w\)
and \(v\), whereas the outgoing variable is \(a h-w\). We keep this gate
fixed by imposing homogeneous conditions on both incoming variables:
\begin{equation}\label{eq:sv-upstream-bc}
 \bigl[a h+w\bigr](t,0,x_2)=v(t,0,x_2)=0,
 \qquad (t,x_2)\in(0,T)\times(0,1).
\end{equation}

Given \(T>0\), we define the approximate mechanical energy accumulated at the
downstream gate over \([0,T]\) as the sum of the kinetic and potential energies
of the water flow:
\begin{equation*}
 J_T(u):=\int_0^T\int_0^1\left[\frac{H^*}{2}(W^2+V^2)+\frac{g}{2}H^2\right](t,L,x_2)\dd x_2\dd t.
\end{equation*}
The kinetic-energy term penalizes high flow velocities at the downstream
gate. The potential-energy term penalizes large water depths, since high
water levels reduce the margin to overtopping. Using \(H=H^*+h\), \(W=W^*+w\), and \(V=v\),
\eqref{eq:sv-downstream-bc} and the definition of \(\zeta_+\) give
\begin{equation*}
\begin{aligned}
 J_T(u)=\int_0^T\int_0^1\bigg[&\frac{H^*}{2}
 \left(\left(W^*+\frac{\zeta_{+,1}-u}{2}\right)^2+\zeta_{+,2}^2\right)
 +\frac{g}{2}\left(H^*+\frac{u+\zeta_{+,1}}{2a}\right)^2\bigg]\dd x_2\dd t.
\end{aligned}
\end{equation*}

Motivated by the need for energy-efficient and safe operation of
controlled waterways \cite{VanOverloopEtAl2010}, we minimize \(J_T\)
over the downstream control \(u\).
This leads to the optimal control problem
\begin{equation}\label{eq:sv-optimal-control}
\begin{aligned}
   \underset{u\in L^2((0,T)\times(0,1))}{\operatorname{inf}}
 &J_T(u)\\
 \text{subject to}\quad
 &\begin{cases}
   \eqref{eq:sv-linearized},
     & \text{(equation)},\\
   \eqref{eq:sv-upper-wall-bc},\quad \eqref{eq:sv-lower-wall-bc},\quad
   \eqref{eq:sv-downstream-bc},\quad \eqref{eq:sv-upstream-bc},
     & \text{(boundary conditions)},\\
   f(0,\cdot)=f_0,
     & \text{(initial condition)}.
  \end{cases}
\end{aligned}
\end{equation}
Take
\[
 E(x)=(c-x_1)\operatorname{diag}\!\left(\frac{g}{H^*},1,1\right),
\]
with the positive constant $c$ suitably big.
Following \cite[Section~4]{YangYong2024}, one can verify that \(E\)
satisfies Assumption~\ref{ass:stabilizing-symmetrizer}. One can easily check that the requirements in Corollary \ref{cor:partial-boundary} for boundary conditions and cost functions also hold. Therefore,
Corollary~\ref{cor:partial-boundary} yields the exponential turnpike
property.

\subsection{Numerical illustration}

In this section, we perform a numerical experiment on the optimal boundary control problem \eqref{eq:sv-optimal-control} for the linearized
2-D Saint--Venant equations to illustrate the exponential turnpike property.  We discretize these equations by a
first-order upwind method. The control acts on the downstream incoming
variable, while both upstream incoming variables are set to zero. The
computation uses \(g=9.81\),
\(H^*=W^*=1\), \(k=0.15\), \(\ell=0.20\), \(L=10\), and \(T=40\).
The spatial and temporal step sizes are \(\Delta x_1=0.2\),
\(\Delta x_2=0.05\), and \(\Delta t=0.005\).
The initial data are
$
 (h_0,w_0,v_0)=\sin(\pi x_1/L)\bigl(0.05\cos(\pi x_2),\,0.03\cos(2\pi x_2),\,0.03\sin(\pi x_2)\bigr).
$

Figure~\ref{fig:sv-numerical-turnpike} displays, in order,
\[
\begin{aligned}
 &t\longmapsto\norm{f_T(t)-f_s}_{L^2(\Omega;\R^3)},\\
 &t\longmapsto\left(\int_{I_t}\int_0^1
 \abs{u_T(s,x_2)-u_s(x_2)}^2\dd x_2\dd s\right)^{1/2},\\
 &t\longmapsto\left(\int_{I_t}\int_0^1
 \norm{\zeta_{T,+}(s,x_2)-\zeta_{s,+}(x_2)}_{\R^2}^2\dd x_2\dd s\right)^{1/2}.
\end{aligned}
\]

\begin{center}
\begin{minipage}{\textwidth}
\centering
\captionsetup{hypcap=false}
\includegraphics[width=\textwidth]{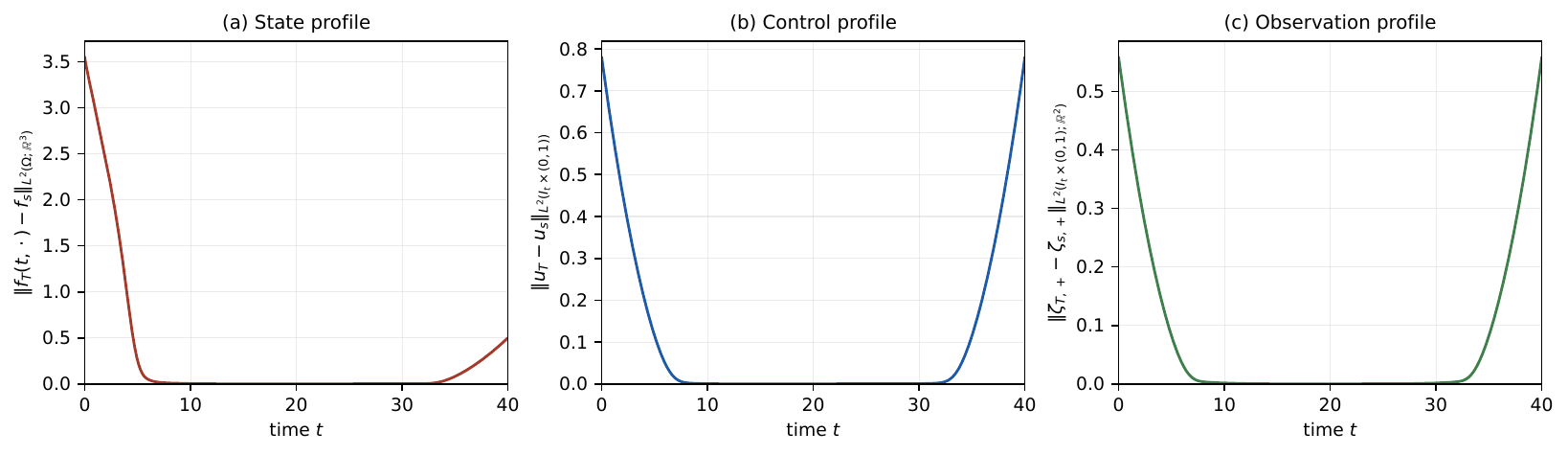}
\captionof{figure}{Numerical illustration of the turnpike phenomenon.
Panel (a) shows the pointwise-in-time state profile, while panels (b) and
(c) show the \(L^2\) profiles of the downstream control and the full
downstream observation over \(I_t\times(0,1)\).}
\label{fig:sv-numerical-turnpike}
\end{minipage}
\end{center}

All three profiles remain small over the central part of the horizon and
increase only near the temporal endpoints, consistently with the exponential
turnpike estimate.

\section{Conclusion}\label{sec:conclusion}

In this paper, we proved an exponential turnpike property for optimal
boundary control of multi-D first-order linear hyperbolic systems with
unbounded control and observation operators.  The result gives a
pointwise-in-time estimate for the state and \(L^2\)-estimates for the control
and observation, and also implies integral, measure, and localized turnpike
properties.  The proof combines dissipation estimates, input--output duality,
and strong convexity; we also establish stationary well-posedness and
illustrate the result numerically for the linearized 2-D Saint--Venant
equations.

In future work, we hope to extend the present analysis to quasilinear
first-order hyperbolic systems.  It would also be interesting to generalize
these results to a broader abstract operator framework, in the spirit of
\cite{NguyenTrelat2026,PorrettaZuazua2013,GruneSchallerSchiela2020}.

\appendix

\section{Well-posedness of the stationary boundary value problem}
\label{app:stationary-wellposedness}
In this appendix, we establish stationary well-posedness under
Assumptions~\ref{ass:stabilizing-symmetrizer} and
\ref{ass:boundary-dissipativity}. We first show that without
Assumption~\ref{ass:stabilizing-symmetrizer}, the stationary boundary
value problem need not be well-posed.

\subsection{A stationary Saint--Venant counterexample}
\label{app:static-counterexample}
Consider the source-free stationary Saint--Venant system on
\(\Omega=(0,1)^2\), linearized about the rest state
\((H^*,0,0)\), where \(H^*>0\):
\begin{equation}\label{eq:sv-static-counterexample-system}
\begin{pmatrix}
 0&H^*&0\\
 g&0&0\\
 0&0&0
\end{pmatrix}
\begin{pmatrix}h\\w\\v\end{pmatrix}_{x_1}
+
\begin{pmatrix}
 0&0&H^*\\
 0&0&0\\
 g&0&0
\end{pmatrix}
\begin{pmatrix}h\\w\\v\end{pmatrix}_{x_2}
=0.
\end{equation}
We impose the boundary conditions
\begin{equation}\label{eq:sv-static-counterexample-bc}
\begin{aligned}
 h(x_1,1)&=\sqrt{\frac{H^*}{g}}\,v(x_1,1),
 &\qquad
 h(x_1,0)&=-\sqrt{\frac{H^*}{g}}\,v(x_1,0),\\
 h(1,x_2)&=\sqrt{\frac{H^*}{g}}\,w(1,x_2),
 &
 h(0,x_2)&=-\sqrt{\frac{H^*}{g}}\,w(0,x_2).
\end{aligned}
\end{equation}
These conditions correspond to \(K=0\) and \(u=0\) in \eqref{eq:stationary-system}.
Moreover, one can easily verify that no matrix-valued function \(E=E(x)\) can satisfy
Assumption~\ref{ass:stabilizing-symmetrizer}. Note that for every pair of positive integers \(k,l\), the function
\begin{equation*}
 (h,w,v)(x_1,x_2)
 =
 \left(
 0,\,
l\sin(k\pi x_1)\cos(l\pi x_2),\,
 -k\cos(k\pi x_1)\sin(l\pi x_2)
 \right)
\end{equation*}
solves
\eqref{eq:sv-static-counterexample-system}--\eqref{eq:sv-static-counterexample-bc}. Hence the homogeneous stationary boundary value problem has infinitely many solutions and is not well-posed.

\subsection{Well-posedness}
Under Assumptions~\ref{ass:stabilizing-symmetrizer} and
\ref{ass:boundary-dissipativity}, we establish the well-posedness of
\eqref{eq:stationary-system}. The adjoint problem, for given
\(q\in\dbX\) and \(z\in\dbY\),
\begin{equation}\label{eq:stationary-adjoint-system}
\begin{cases}
 -\mathcal A^*g=q,&x\in\Omega,\\
 \Lambda_+\xi_++K^\top\Lambda_-\xi_-=z,&x\in\Gamma,
\end{cases}
\qquad \xi=\Pi^\top g,
\end{equation}
is treated similarly.

\begin{lemma}\label{lem:stationary-energy}
Under Assumptions~\ref{ass:stabilizing-symmetrizer} and
\ref{ass:boundary-dissipativity}, every stationary solution
\(f\in\dbX\) of \eqref{eq:stationary-system} with input
\(u\in\dbU\) satisfies
\begin{equation}\label{eq:stationary-coercive}
 \norm{f}_{\dbX}^2+\norm{\zeta_+}_{\dbY}^2
 \le C\norm{u}_{\dbU}^2.
\end{equation}
\end{lemma}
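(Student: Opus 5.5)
The plan is to run the stationary analogue of the energy computation in the proof of Lemma~\ref{lem:forward-energy-estimate}, with the time-derivative term absent. Let \(f\in\dbX\) solve \eqref{eq:stationary-system} with input \(u\in\dbU\). I would multiply the equation \(\sum_j A_jf_{x_j}=Qf\) on the left by \(f^\top E\) and integrate over \(\Omega\). Since each \(EA_j\) is symmetric, we have
\[
2f^\top EA_jf_{x_j}=\partial_{x_j}\bigl(f^\top EA_jf\bigr)-f^\top\partial_{x_j}(EA_j)f .
\]
Integrating by parts (divergence theorem on the polyhedral domain) then gives the identity
\[
0=\int_\Omega f^\top\Bigl[Q^\top E+EQ+\sum_{j=1}^d\partial_{x_j}(EA_j)\Bigr]f\dd x-\int_\Gamma f^\top EA_\nu f\dd\sigma .
\]
This is exactly the right-hand side of the expression for \(\frac{\dd}{\dd t}\mathcal E(f(t))\) in the proof of Lemma~\ref{lem:forward-energy-estimate}, now set equal to zero.

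Next I insert the two dissipation conditions.

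1. By \eqref{eq:E-internal-dissipation}, the interior term is at most \(-C\norm{f}_{\dbX}^2\).
2. Since \(\zeta_-=K\zeta_++u\), the boundary estimate \eqref{eq:forward-boundary-algebra} bounds \(-\int_\Gamma f^\top EA_\nu f\dd\sigma\) by \(-\delta\norm{\zeta_+}_{\dbY}^2+C\norm{u}_{\dbU}^2\).

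Moving terms across yields
\[
c\norm{f}_{\dbX}^2+\delta\norm{\zeta_+}_{\dbY}^2\le C\norm{u}_{\dbU}^2,
\]
which is \eqref{eq:stationary-coercive}.

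The main obstacle is justifying this computation for a solution that is only in \(\dbX\) with \(\mathcal Af\in\dbX\). For such \(f\), the boundary trace and the integration by parts are not immediate. Note that \(\zeta_0\) drops out of \(f^\top EA_\nu f\), so only the traces of \(\zeta_\pm\) are involved.

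My first attempt would treat the stationary solution as a time-independent mild solution of \eqref{eq:evolution-system} with constant input \(u\) and initial datum \(f_0=f\). Theorem~\ref{thm:wellposedness} then makes \(\zeta_+\in L^2(0,T;\dbY)\) meaningful. The energy inequality \eqref{eq:forward-complete-energy} holds for this solution by the density argument cited after Lemma~\ref{lem:forward-energy-estimate}. For a constant-in-time solution, \(\frac{\dd}{\dd t}\mathcal E=0\), and integrating over \((0,1)\) gives the claim directly.

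This route requires that "solution of \eqref{eq:stationary-system}" be interpreted so that the constant function is a mild solution. A convenient form is \(f\in D(\mathcal A_{\max})\) together with the boundary relation, in the sense of the boundary operators of Appendix~\ref{app:boundary-operators}. Alternatively, one could appeal to the trace theory for Friedrichs systems used in \cite{HuangTemam2014}, approximating by smooth functions in the graph norm.
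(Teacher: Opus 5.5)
Your proposal is correct and, in its rigorous form, follows the paper's proof: the paper also regards $f$ as a time-independent solution of \eqref{eq:evolution-system} with initial datum $f$ and constant input $u$, and applies the $T$-uniform forward estimate. The paper uses the integrated bound \eqref{eq:forward-uniform-estimate}, divides by $T$, and lets $T\to\infty$; you instead integrate \eqref{eq:forward-complete-energy} over $(0,1)$ using $\frac{\dd}{\dd t}\mathcal E=0$, which avoids the limit and is otherwise the same. The step you flag, identifying a stationary solution with a mild solution of the evolution problem, is also simply asserted in the paper.
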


\begin{proof}
Since \(f\) is also a time-independent solution of
\eqref{eq:evolution-system} with initial datum \(f\) and constant input
\(u\), \eqref{eq:forward-uniform-estimate} gives, for every \(T>0\),
\begin{equation*}
 (1+T)\norm{f}_{\dbX}^2+T\norm{\zeta_+}_{\dbY}^2
 \le C\left(\norm{f}_{\dbX}^2+T\norm{u}_{\dbU}^2\right).
\end{equation*}
Since \(C\) is independent of \(T\), dividing by \(T\) and letting
\(T\to\infty\) proves \eqref{eq:stationary-coercive}.
\end{proof}

For the existence argument, we also need the smooth coercive estimate
contained in the proof of Lemma~\ref{lem:backward-energy-estimate}.
\begin{lemma}\label{lem:stationary-coercive-estimates}
Under Assumptions~\ref{ass:stabilizing-symmetrizer} and
\ref{ass:boundary-dissipativity}, there exists \(C>0\) such that
the following estimate holds. For every
\(g\in\mathcal D(\overline\Omega)\), set \(\xi:=\Pi^\top g\)
and \(z:=\Lambda_+\xi_++K^\top\Lambda_-\xi_-\). Then
\begin{equation}\label{eq:adjoint-static-coercive}
 \norm{g}_{\dbX}^2+\norm{\xi_-}_{\dbU}^2
 \leq C\left(
 \norm{\mathcal A^*g}_{\dbX}^2+\norm{z}_{\dbY}^2\right).
\end{equation}
\end{lemma}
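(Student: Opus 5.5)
The estimate will come from the stationary version of the weighted adjoint energy identity used in the proof of Lemma~\ref{lem:backward-energy-estimate}, applied to a smooth, time-independent \(g\). Since \(g\in\mathcal D(\overline\Omega)\) is smooth up to the boundary, all integrations by parts are classical. We therefore need no density argument and no appeal to well-posedness of the stationary adjoint problem. Set \(q:=-\mathcal A^*g\), so that \(g\) formally solves \eqref{eq:stationary-adjoint-system} with data \((q,z)\).

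First, I multiply \(q=-\mathcal A^*g=-\sum_jA_j^\top g_{x_j}-Q^\top g\) by \(2E^{-1}g\) and integrate over \(\Omega\). By the symmetry of \(E^{-1}A_j^\top\) (Proposition~\ref{prop:inverse-adjoint-symmetrizer}),
\[
 2\,g^\top E^{-1}A_j^\top g_{x_j}=\partial_{x_j}\bigl(g^\top E^{-1}A_j^\top g\bigr)-g^\top\partial_{x_j}(E^{-1}A_j^\top)g .
\]
The divergence theorem then gives
\[
 0=\int_\Omega g^\top\Bigl[QE^{-1}+E^{-1}Q^\top-\sum_{j}\partial_{x_j}(E^{-1}A_j^\top)\Bigr]g\dd x
 +\int_\Gamma g^\top E^{-1}A_\nu^\top g\dd\sigma+2\int_\Omega g^\top E^{-1}q\dd x .
\]
This is exactly the right-hand side of the identity in the proof of Lemma~\ref{lem:backward-energy-estimate}, with the time derivative removed. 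It is the same identity with \(g_t=0\).

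Next I bound each term. Proposition~\ref{prop:inverse-adjoint-symmetrizer} bounds the first integral above by \(-c\norm{g}_{\dbX}^2\). For the boundary term, \(z\) is by definition equal to \(\Lambda_+\xi_++K^\top\Lambda_-\xi_-\), so \eqref{eq:dualdissipation} in Lemma~\ref{lem:dual-boundary-dissipativity} applies pointwise on each face and gives
\[
 \int_\Gamma g^\top E^{-1}A_\nu^\top g\dd\sigma\le C\norm{z}_{\dbY}^2-\delta\norm{\xi_-}_{\dbU}^2 .
\]
For the source term, the uniform bounds on \(E^{-1}\) and Young's inequality give \(2\abs{\int_\Omega g^\top E^{-1}q\dd x}\le \frac c2\norm{g}_{\dbX}^2+C\norm{q}_{\dbX}^2\). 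Rearranging yields
\[
 \frac c2\norm{g}_{\dbX}^2+\delta\norm{\xi_-}_{\dbU}^2\le C\bigl(\norm{\mathcal A^*g}_{\dbX}^2+\norm{z}_{\dbY}^2\bigr),
\]
which is \eqref{eq:adjoint-static-coercive}.

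The only delicate point is the boundary bookkeeping on the polyhedral \(\Gamma\). The divergence theorem on a Lipschitz polyhedron is applied to the \(W^{1,1}\) field \(g^\top E^{-1}A_j^\top g\), using \(E\in C^1(\overline\Omega)\). The normal flux \(g^\top E^{-1}A_\nu^\top g\) must then be rewritten facewise through \(\Pi\) as in Lemma~\ref{lem:dual-boundary-dissipativity}. Because \(\nu\), \(\Pi\), \(n_\pm\) and the constants there are constant on each of the finitely many faces, and edges have surface measure zero, the facewise inequalities sum to a global one with a uniform constant.
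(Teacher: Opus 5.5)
Your proposal is correct and is essentially the paper's proof. Both arguments integrate \(2\ip{\mathcal A^*g}{E^{-1}g}_{\dbX}\) by parts, bound the bulk term by \eqref{eq:inverse-internal-dissipation} and the boundary flux by \eqref{eq:dualdissipation}, and absorb the \(\norm{g}_{\dbX}^2\) term via Young's inequality; naming \(q:=-\mathcal A^*g\) and spelling out the facewise boundary bookkeeping are only cosmetic additions.
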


\begin{proof}
Integration by parts gives
\begin{equation*}
 2\ip{\mathcal A^*g}{E^{-1}g}_{\dbX}
 =\int_\Omega g^\top\left[
 QE^{-1}+E^{-1}Q^\top
 -\sum_{j=1}^d\partial_{x_j}(E^{-1}A_j^\top)
 \right]g\dd x
 +\int_\Gamma g^\top E^{-1}A_\nu^\top g\dd\sigma.
\end{equation*}
By \eqref{eq:inverse-internal-dissipation}, \eqref{eq:dualdissipation},
and Young's inequality,
\begin{equation*}
 c\norm{g}_{\dbX}^2+c\norm{\xi_-}_{\dbU}^2
 \le -2\ip{\mathcal A^*g}{E^{-1}g}_{\dbX}+C\norm{z}_{\dbY}^2
 \le \frac c2\norm{g}_{\dbX}^2
 +C\left(\norm{\mathcal A^*g}_{\dbX}^2+\norm{z}_{\dbY}^2\right).
\end{equation*}
Absorbing the first term on the right proves the estimate.
\end{proof}

We now construct a stationary solution and use
\eqref{eq:stationary-coercive} to obtain uniqueness.
\begin{theorem}[Stationary well-posedness]\label{thm:static-wp}
Under Assumptions~\ref{ass:stabilizing-symmetrizer} and
\ref{ass:boundary-dissipativity}, there exists \(C>0\) such that,
for every \(u\in\dbU\), problem \eqref{eq:stationary-system}
has a unique solution \(f\in\dbX\), with \(\zeta_+\in\dbY\), and
\begin{equation*}
 \norm{f}_{\dbX}^2+\norm{\zeta_+}_{\dbY}^2
 \leq C\norm{u}_{\dbU}^2.
\end{equation*}
\end{theorem}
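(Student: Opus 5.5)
The plan is a duality argument: existence comes from Lemma~\ref{lem:stationary-coercive-estimates} through a Hahn--Banach/Riesz construction on the adjoint side, while uniqueness and the bound come from Lemma~\ref{lem:stationary-energy}.

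\emph{Uniqueness and the a priori bound.} If \(f_1,f_2\) both solve \eqref{eq:stationary-system} with the same \(u\), their difference solves the problem with \(u=0\). By \eqref{eq:stationary-coercive} the difference vanishes. The same lemma gives the stated bound for any solution, so only existence remains.

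\emph{Existence by duality.} Fix \(u\in\dbU\). Let \(\mathcal V\) be the image set
\[
\mathcal V:=\{(-\mathcal A^*g,\,z):\ g\in\mathcal D(\overline\Omega),\ z=\Lambda_+\xi_++K^\top\Lambda_-\xi_-\}\subset\dbX\times\dbY .
\]
On \(\mathcal V\) define the functional
\[
\ell(-\mathcal A^*g,z):=-\ip{u}{\Lambda_-\xi_-}_{\dbU}.
\]
This is the boundary term a solution \(f\) would have to reproduce in the Green identity, namely the static analogue of \eqref{eq:cross-green-integrated}:
\[
\ip{f}{-\mathcal A^*g}_{\dbX}+\ip{\zeta_+}{z}_{\dbY}=-\ip{u}{\Lambda_-\xi_-}_{\dbU}.
\]
Estimate \eqref{eq:adjoint-static-coercive} shows that \(\ell\) is well defined, since \(g\) is determined by \((\mathcal A^*g,z)\). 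It also gives \(|\ell|\le C\norm{u}_{\dbU}\,\norm{(\mathcal A^*g,z)}_{\dbX\times\dbY}\). Extending \(\ell\) by continuity to the closure of \(\mathcal V\) and by zero on its orthogonal complement, Riesz representation yields \((f,y)\in\dbX\times\dbY\) satisfying the identity above for all test functions \(g\), with \(y\) in place of \(\zeta_+\).

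\emph{Identifying the solution.} First take \(g\in C_c^\infty(\Omega)\). Then \(z=0\), \(\xi_-=0\), and the identity says \(\sum_j A_jf_{x_j}=Qf\) in the distributional sense. In particular \(\mathcal Af\in\dbX\), so \(f\) has a boundary trace in the weak (dual) sense of Friedrichs systems. Next take general \(g\) and compare with the formal integration by parts. This should give \(\int_\Gamma f^\top A_\nu^\top g\,\dd\sigma=\ip{y}{z}+\ip{u}{\Lambda_-\xi_-}\). Using \eqref{eq:standard-boundary-pairing} and letting the boundary values \(\xi_-\) and \(z\) range freely, I would read off \(\zeta_+=y\) and \(\zeta_-=K\zeta_++u\). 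Freedom of \(\xi_-\) and \(z\) holds because \(\Pi^\top\) is invertible facewise, so \((\xi_+,\xi_0,\xi_-)\) can be prescribed arbitrarily.

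\emph{Main obstacle.} The hard step is the trace identification: showing that a weak \(L^2\) solution with \(\mathcal Af\in\dbX\) has an \(L^2\) trace of \(\zeta_+\) and that the boundary condition holds in the strong sense. On polyhedral domains with corners this is the weak-equals-strong question. I would first try to bypass it by matching definitions. Any reasonable notion of solution of \eqref{eq:stationary-system} (mirroring the mild-solution framework of Theorem~\ref{thm:wellposedness}) is characterized exactly by this duality identity, with \(\zeta_+:=y\in\dbY\). Then \(\zeta_+\in\dbY\) comes for free from the Riesz construction. If a strong solution is needed, I would mollify tangentially on each face, or approximate \(u\) by smooth compatible data, and pass to the limit using the uniform bound \eqref{eq:stationary-coercive}.
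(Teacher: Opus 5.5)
Your proposal is correct and is essentially the paper's proof: the paper also takes the subspace $\{(\mathcal A^*g,-z): g\in\mathcal D(\overline\Omega)\}$ (the same subspace and the same linear functional as your $\mathcal V$ and $\ell$, written with the opposite sign convention) with the functional $\langle u,\Lambda_-\xi_-\rangle_{\dbU}$, uses \eqref{eq:adjoint-static-coercive} for well-definedness and boundedness, extends by Hahn--Banach and represents by Riesz, and obtains uniqueness and the bound from Lemma~\ref{lem:stationary-energy}. Your explicit identification step (compactly supported $g$ first, then reading off $\zeta_+=y$ and $\zeta_-=K\zeta_++u$ from the boundary pairing) and your flagging of the trace and weak-equals-strong issue go beyond the paper, which simply asserts that $f$ ``readily'' solves \eqref{eq:stationary-system}.
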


\begin{proof}
For the forward problem, consider the subspace
\begin{equation*}
 \mathscr R:=\left\{(\mathcal A^*g,-z):
 g\in\mathcal D(\overline\Omega)\right\}
 \subset\dbX\times\dbY,
\end{equation*}
where \(\xi:=\Pi^\top g\) and
\(z:=\Lambda_+\xi_++K^\top\Lambda_-\xi_-\), and define
\begin{equation*}
 \ell_u(\mathcal A^*g,-z):=\ip{u}{\Lambda_-\xi_-}_{\dbU}.
\end{equation*}
This functional is well defined. Indeed, if two smooth functions
\(g_1\) and \(g_2\) give the same element of \(\mathscr R\), then
\(\mathcal A^*(g_1-g_2)=0\) and the corresponding boundary quantity
\(z_1-z_2\) vanishes. By \eqref{eq:adjoint-static-coercive},
\(\xi_{1,-}-\xi_{2,-}=0\). Thus the value of
\(\ell_u\) is independent of the chosen representative. Moreover, the
boundedness of \(\Lambda_-\) and \eqref{eq:adjoint-static-coercive} give
\begin{equation*}
 \abs{\ell_u(\mathcal A^*g,-z)}
 \leq C\norm{u}_{\dbU}
 \left(\norm{\mathcal A^*g}_{\dbX}^2+\norm{z}_{\dbY}^2\right)^{1/2}.
\end{equation*}
Hence \(\ell_u\) is bounded on \(\mathscr R\). By the Hahn--Banach
theorem, it extends to \(\dbX\times\dbY\) without increasing its norm.
The Riesz representation theorem gives a pair
\((f,\zeta_+)\in\dbX\times\dbY\) such that
\begin{equation*}
 \ip{f}{p}_{\dbX}+\ip{\zeta_+}{r}_{\dbY}
 =\widetilde\ell_u(p,r),
 \qquad (p,r)\in\dbX\times\dbY.
\end{equation*}
Taking \((p,r)=(\mathcal A^*g,-z)\), we readily see that \(f\) solves
\eqref{eq:stationary-system}.
The estimate and uniqueness follow from Lemma~\ref{lem:stationary-energy}.
\end{proof}

\section{The boundary control and observation operators}
\label{app:boundary-operators}
In this appendix, we give the precise definitions of the operators in
\eqref{eq:abstract-control-system}. Equip \(D(\mathcal A)\) and
\(D(\mathcal A^*)\) with their graph norms, and let
\(D(\mathcal A^*)'\) denote the dual of \(D(\mathcal A^*)\), with
\(\dbX\) as the pivot space. For \(g\in D(\mathcal A^*)\), write \(\xi:=\Pi^\top g\). Motivated by the boundary term in the integration-by-parts identity, we define
\(\mathcal B\in\mathcal L\bigl(\dbU,D(\mathcal A^*)'\bigr)\) by
\begin{equation*}
 \left\langle\mathcal Bu,g\right\rangle_{D(\mathcal A^*)',D(\mathcal A^*)}
 :=\ip{u}{-\Lambda_-\xi_-}_{\dbU}.
\end{equation*}
As a result, we also have
\begin{equation*}
 \mathcal B^*\in\mathcal L\bigl(D(\mathcal A^*),\dbU\bigr),
 \qquad \mathcal B^*g:=-\Lambda_-\xi_-.
\end{equation*}
The quantity \(\mathcal Bu\) represents a boundary-supported functional
and, in general, is not an element of \(\dbX\). Hence
\(\mathcal B\notin\mathcal L(\dbU,\dbX)\).

For \(f\in D(\mathcal A)\), write \(\zeta:=\Pi^{-1}f\) on \(\Gamma\) and define
\(\mathcal Cf:=\zeta_+\). For a general \(f\in\dbX\), however, its
boundary trace is not defined; hence
\(\mathcal C\notin\mathcal L(\dbX,\dbY)\).

\bibliographystyle{abbrv}
\bibliography{references}

\begin{thebibliography}{10}

\bibitem{BastinCoron2016}
G.~Bastin and J.-M. Coron.
\newblock {\em Stability and Boundary Stabilization of {1-D} Hyperbolic
  Systems}, volume~88 of {\em Progress in Nonlinear Differential Equations and
  Their Applications}.
\newblock Birkh{\"a}user, Cham, 2016.

\bibitem{BenzoniGavageSerre2007}
S.~Benzoni-Gavage and D.~Serre.
\newblock {\em Multi-Dimensional Hyperbolic Partial Differential Equations:
  First-Order Systems and Applications}.
\newblock Oxford Mathematical Monographs. Oxford University Press, Oxford,
  2007.

\bibitem{BongartiHintermueller2024}
M.~Bongarti and M.~Hinterm{\"u}ller.
\newblock Optimal boundary control of the isothermal semilinear {Euler}
  equation for gas dynamics on a network.
\newblock {\em Applied Mathematics \& Optimization}, 89(2):36, 2024.

\bibitem{Brezis2011}
H.~Brezis.
\newblock {\em Functional Analysis, Sobolev Spaces and Partial Differential
  Equations}.
\newblock Universitext. Springer, New York, 2011.

\bibitem{ClementCoronShang2013}
F.~Cl{\'e}ment, J.-M. Coron, and P.~Shang.
\newblock Optimal control of cell mass and maturity in a model of follicular
  ovulation.
\newblock {\em SIAM Journal on Control and Optimization}, 51(2):824--847, 2013.

\bibitem{DorfmanSamuelsonSolow1958}
R.~Dorfman, P.~A. Samuelson, and R.~M. Solow.
\newblock {\em Linear Programming and Economic Analysis}.
\newblock McGraw-Hill, New York, 1958.

\bibitem{EsteveYagueGeshkovskiPighinZuazua2022}
C.~Esteve-Yag{\"u}e, B.~Geshkovski, D.~Pighin, and E.~Zuazua.
\newblock Turnpike in {Lipschitz}--nonlinear optimal control.
\newblock {\em Nonlinearity}, 35(4):1652--1701, 2022.

\bibitem{FaulwasserKordaJonesBonvin2017}
T.~Faulwasser, M.~Korda, C.~N. Jones, and D.~Bonvin.
\newblock On turnpike and dissipativity properties of continuous-time optimal
  control problems.
\newblock {\em Automatica}, 81:297--304, 2017.

\bibitem{GeshkovskiZuazua2022}
B.~Geshkovski and E.~Zuazua.
\newblock Turnpike in optimal control of {PDE}s, {ResNets}, and beyond.
\newblock {\em Acta Numerica}, 31:135--263, 2022.

\bibitem{GruneMueller2016}
L.~Gr{\"u}ne and M.~A. M{\"u}ller.
\newblock On the relation between strict dissipativity and turnpike properties.
\newblock {\em Systems \& Control Letters}, 90:45--53, 2016.

\bibitem{GruneSchallerSchiela2020}
L.~Gr{\"u}ne, M.~Schaller, and A.~Schiela.
\newblock Exponential sensitivity and turnpike analysis for linear quadratic
  optimal control of general evolution equations.
\newblock {\em Journal of Differential Equations}, 268(12):7311--7341, 2020.

\bibitem{Gugat2019ConvexHyperbolic}
M.~Gugat.
\newblock A turnpike result for convex hyperbolic optimal boundary control
  problems.
\newblock {\em Pure and Applied Functional Analysis}, 4(4):849--866, 2019.

\bibitem{GugatHante2019}
M.~Gugat and F.~M. Hante.
\newblock On the turnpike phenomenon for optimal boundary control problems with
  hyperbolic systems.
\newblock {\em SIAM Journal on Control and Optimization}, 57(1):264--289, 2019.

\bibitem{GugatHerty2023}
M.~Gugat and M.~Herty.
\newblock Turnpike properties of optimal boundary control problems with random
  linear hyperbolic systems.
\newblock {\em ESAIM: Control, Optimisation and Calculus of Variations},
  29:Paper No. 55, 27 pp., 2023.

\bibitem{GugatTrelatZuazua2016}
M.~Gugat, E.~Tr{\'e}lat, and E.~Zuazua.
\newblock Optimal {Neumann} control for the {1D} wave equation: Finite horizon,
  infinite horizon, boundary tracking terms and the turnpike property.
\newblock {\em Systems \& Control Letters}, 90:61--70, 2016.

\bibitem{HertyThein2024}
M.~Herty and F.~Thein.
\newblock Boundary feedback control for hyperbolic systems.
\newblock {\em ESAIM: Control, Optimisation and Calculus of Variations}, 30:71,
  2024.

\bibitem{HertyZhou2025}
M.~Herty and Y.~Zhou.
\newblock Exponential turnpike property for particle systems and mean-field
  limit.
\newblock {\em European Journal of Applied Mathematics}, 36(5):993--1011, 2025.

\bibitem{HuangTemam2014}
A.~Huang and R.~Temam.
\newblock The linear hyperbolic initial and boundary value problems in a domain
  with corners.
\newblock {\em Discrete and Continuous Dynamical Systems - B},
  19(6):1627--1665, 2014.

\bibitem{MoralesHernandezEtAl2013}
M.~Morales-Hern{\'a}ndez, J.~Murillo, and P.~Garc{\'i}a-Navarro.
\newblock The formulation of internal boundary conditions in unsteady {2-D}
  shallow water flows: Application to flood regulation.
\newblock {\em Water Resources Research}, 49(1):471--487, 2013.

\bibitem{NguyenTrelat2026}
H.-M. Nguyen and E.~Tr{\'e}lat.
\newblock Turnpike property of linear quadratic control problems with unbounded
  control operators.
\newblock {\em Advances in Differential Equations}, 31(3--4):219--252, 2026.

\bibitem{PorrettaZuazua2013}
A.~Porretta and E.~Zuazua.
\newblock Long time versus steady state optimal control.
\newblock {\em SIAM Journal on Control and Optimization}, 51(6):4242--4273,
  2013.

\bibitem{PorrettaZuazua2016Remarks}
A.~Porretta and E.~Zuazua.
\newblock Remarks on long time versus steady state optimal control.
\newblock In F.~Ancona, P.~Cannarsa, C.~Jones, and A.~Portaluri, editors, {\em
  Mathematical Paradigms of Climate Science}, volume~15 of {\em Springer INdAM
  Series}, pages 67--89. Springer, Cham, 2016.

\bibitem{SchmittUlbrich2021}
J.~M. Schmitt and S.~Ulbrich.
\newblock Optimal boundary control of hyperbolic balance laws with state
  constraints.
\newblock {\em SIAM Journal on Control and Optimization}, 59(2):1341--1369,
  2021.

\bibitem{SunYong2024}
J.~Sun and J.~Yong.
\newblock Turnpike properties for stochastic linear-quadratic optimal control
  problems with periodic coefficients.
\newblock {\em Journal of Differential Equations}, 400:189--229, 2024.

\bibitem{TrelatZhang2018}
E.~Tr{\'e}lat and C.~Zhang.
\newblock Integral and measure-turnpike properties for infinite-dimensional
  optimal control systems.
\newblock {\em Mathematics of Control, Signals, and Systems}, 30(1):Paper No.
  3, 34 pp., 2018.

\bibitem{TrelatZhangZuazua2018}
E.~Tr{\'e}lat, C.~Zhang, and E.~Zuazua.
\newblock Steady-state and periodic exponential turnpike property for optimal
  control problems in {Hilbert} spaces.
\newblock {\em SIAM Journal on Control and Optimization}, 56(2):1222--1252,
  2018.

\bibitem{TrelatZuazua2015}
E.~Tr{\'e}lat and E.~Zuazua.
\newblock The turnpike property in finite-dimensional nonlinear optimal
  control.
\newblock {\em Journal of Differential Equations}, 258(1):81--114, 2015.

\bibitem{TrelatZuazuaSurvey}
E.~Tr{\'e}lat and E.~Zuazua.
\newblock Turnpike in optimal control and beyond: A survey, 2025.
\newblock arXiv:2503.20342.

\bibitem{TucsnakWeiss2009}
M.~Tucsnak and G.~Weiss.
\newblock {\em Observation and Control for Operator Semigroups}.
\newblock Birkh{\"a}user Advanced Texts: Basler Lehrb{\"u}cher. Birkh{\"a}user,
  Basel, 2009.

\bibitem{VanOverloopEtAl2010}
P.~J. van Overloop, R.~R. Negenborn, S.~V. Weijs, W.~Malda, M.~R. Bruggers, and
  B.~De~Schutter.
\newblock Linking water and energy objectives in lowland areas through the
  application of model predictive control.
\newblock In {\em Proceedings of the 2010 IEEE International Conference on
  Control Applications}, pages 1887--1891, 2010.

\bibitem{YangYong2024}
H.~Yang and W.-A. Yong.
\newblock Feedback boundary control of multi-dimensional hyperbolic systems
  with relaxation.
\newblock {\em Automatica}, 167:111791, 2024.

\bibitem{YangYong2025}
H.~Yang and W.-A. Yong.
\newblock Boundary control of multidimensional discrete-velocity kinetic
  models.
\newblock {\em IEEE Transactions on Automatic Control}, 70(9):6183--6190, 2025.

\end{thebibliography}

\end{document}